\DeclareMathOperator{\tr}{tr}
\DeclareMathOperator{\id}{Id}
\DeclareMathOperator{\cen}{Cen}
\DeclareMathOperator{\kew}{Skew}
\DeclareMathOperator{\ym}{Sym}
\DeclareMathOperator{\GM}{GM}
\DeclareMathOperator{\Span}{span}
\DeclareMathOperator{\Char}{char}
\newcommand{\csim}{\stackrel{\mathrm{cyc}}{\thicksim}}
\begin{document}

\newcommand{\A}{\cal{A}}
\newcommand{\F}{\cal{F}}
\newcommand{\R}{\cal{R}}
\newcommand{\cH}{\cal{H}}
\newcommand{\T}{\cal{T}}
\newcommand{\Z}{\cal{Z}}
\newcommand{\J}{\cal{J}}
\newcommand{\cS}{\cal{S}}
\newcommand{\M}{\cal{M}}
\newcommand{\K}{\cal{K}}
\newcommand{\C}{\cal{C}}
\newcommand{\U}{\cal{U}}
\newcommand{\B}{\cal{B}}
\newcommand{\V}{\cal{V}}
\newcommand{\cL}{\cal{L}}
\newcommand{\W}{\cal{W}}
\newcommand{\FF}{\mathbb F}
\newcommand{\KK}{\mathbb F_0}
\newcommand{\Q}{\mathbb Q}
\newcommand{\N}{\mathbb N}
\newcommand{\wh}[1]{\widehat{#1}}
\newcommand{\rsl}{\mathfrak{sl}}
\newcommand{\wt}{\widetilde}
\newcommand{\ov}{\overline}

\newcommand{\Skew}{{\kew\FF\langle\bar X,\bar X^*\rangle}}
\newcommand{\Sym}{{\ym\FF\langle\bar X,\bar X^*\rangle}}

\newcommand{\ax}{\langle\bar X\rangle}

\newtheorem{theorem}{Theorem}[section]
\newtheorem{proposition}[theorem]{Proposition}
\newtheorem{lemma}[theorem]{Lemma}
\newtheorem{corollary}[theorem]{Corollary}

\theoremstyle{definition}
\newtheorem{remark}[theorem]{Remark}
\newtheorem{definition}[theorem]{Definition}
\newtheorem{example}[theorem]{Example}
\newtheorem{examples}[theorem]{Examples}
\newtheorem{question}[theorem]{Question}

\newcommand\cal{\mathcal}

\title[Noncommutative Polynomials and Tracial Nullstellens\"atze]{
Values of Noncommutative Polynomials,\\ Lie Skew-Ideals and Tracial 
 Nullstellens\"atze}

\author[Matej Bre\v sar and Igor Klep]{Matej Bre\v sar${}^1$ and Igor Klep${}^2$}

\address{Department of Mathematics, Faculty of Mathematics and Physics, University
of Ljubljana, Slovenia, and \\
 Department of Mathematics and Computer Science, Faculty of Natural Sciences and Mathematics, University of Maribor, Slovenia	}
\email{matej.bresar@fmf.uni-lj.si}
\address{Department of Mathematics, University of California at San Diego,
USA}
\email{iklep@math.ucsd.edu}
\thanks{${}^1$Supported by the Slovenian Research Agency (program No. P1-0288).}
\thanks{${}^2$Supported by the Slovenian Research Agency (project No. Z1-9570-0101-06).}
\subjclass[2000]{Primary 16R50, Secondary 16W10}
\date{08 October 2008}
\keywords{noncommutative polynomial, Lie ideal, Lie skew-ideal, sum of commutators, trace, involution}

\begin{abstract}
A subspace of an algebra with involution is called a 
{\it Lie skew-ideal} if it is closed under Lie products with
\emph{skew-symmetric} elements. Lie skew-ideals are classified
in central simple algebras with involution (there are eight of them
for involutions of the first kind and four
for involutions of the second kind) and this classification result
is used to characterize noncommutative polynomials via their values in these
algebras. As an application, we deduce that a polynomial is
a sum of commutators and a polynomial identity of $d\times d$
matrices if and only if 
all of its values in the algebra of $d\times d$ matrices have zero trace.
\end{abstract}

\maketitle
\section{Introduction}

Interest in positivity questions of noncommutative polynomials
has been recently revived by Helton's seminal paper \cite{Hel}, in which he
proved that a polynomial is a sum of  squares of polynomials  if and
only if its values in matrices of any size
are positive semidefinite. A nice survey
of recent functional analytic results in this direction
and their various applications is given in \cite{dOHMP}.

One of the results in this vein was obtained  by the second author and
Schweighofer. They showed that
Connes' embedding conjecture on type II$_1$ von Neumann algebras
is equivalent to a problem of describing polynomials whose values at
tuples of self-adjoint $d\times d$ matrices (of norm at most $1$) have
nonnegative trace for every $d\ge 1$; see \cite[Theorem 1.6]{KS} for a
precise formulation.
The natural first step in understanding this problem is examining the zero
trace situation.  The authors proved that a polynomial
whose values always  have trace zero is a sum of commutators  \cite[Theorem
2.1]{KS}. This result was the initial motivation for the present work.

A non-dimensionfree approach to Connes' embedding conjecture entails
studying values
of polynomials when evaluated at tuples of $d\times d$
matrices for a \emph{fixed} $d$. A result in this spirit - a weak
version of  Helton's
sum of squares theorem -
can be obtained from the Procesi and Schacher 1976 paper \cite{ps}. A more
recent reference is
\cite{KU}, where
a Positivstellensatz characterizing polynomials whose values
in $d\times d$ matrices are all positive semidefinite is given.
The next step is to study the nonnegativity  of the trace,  and as a special
case the zero trace. The work on this paper begun by addressing the
latter problem. The solution, which we call the ``tracial
Nullstellensatz'', is simple:
a polynomial has zero trace when evaluated at $d\times d$
matrices if and only if it is a sum of commutators and a polynomial
identity of $d\times d$ matrices (Corollaries
\ref{C1} and
\ref{cor:C3}).

The zero trace problem has led us to consider the following more general
topic: {\em What is the linear span of all the values of a polynomial on a given
algebra $\A$?} Studying this question has turned out to be quite fruitful. As
we shall see, its answer yields tracial Nullstellens\"atze, and on the
other hand, we believe, admittedly  somewhat speculatively, that it is a
natural question 
related to various other
areas. As a matter of fact,  its consideration is, as we shall see,
connected to certain Lie structure topics and also to the notion of polynomial
identities.

Our crucial observation is that the linear span of values of a polynomial 
is a Lie
ideal of the algebra $\A$ in question (Theorem \ref{T1}). This paves the way
for the precise description. For example, in the special case where
$\A=M_d(\FF)$ is the  algebra of all $d\times d$ matrices  over a field
$\FF$ with  $\Char (\FF)=0$, Theorem \ref{T2}
implies that polynomials $f$
can be categorized  into four classes
according to their values:
\begin{enumerate}[\rm (i)]
\item
$f$ is a polynomial identity; in this case the span of its values is $0$;
\item
$f$ is a central polynomial; in this case the span of its values are the
scalar matrices;
\item
$f$ is a sum of commutators and a polynomial identity (but is not a
polynomial
identity); in this case the span of its values is the set of all trace zero
matrices;
\item
if $f$ is none of the above, then the span of its values is the entire
algebra $M_d(\FF)$.
\end{enumerate}
Theorem \ref{T2} works at a greater level of generality - it
is proved for prime PI algebras.
This class of algebras includes finite dimensional central simple 
algebras, and for
them the theorem is as clear as in the $M_d(\FF)$ case just stated.

Our main results, however, deal with algebras with involution. These theorems
are of
the same flavor as those outlined in the preceding paragraph, but somewhat
more involved. We consider noncommutative polynomials in $X_i$ and $X_i^*$
(i.e., elements of a free $*$-algebra), and observe that the linear span of
values of such a polynomial need not be a Lie ideal, but it is  always
closed under Lie products with \emph{skew-symmetric}
elements (Theorem \ref{prop:T1}). We call subspaces having this property
{\it Lie skew-ideals} and classify them
for
prime PI algebras (Theorems \ref{TPI} and \ref{TPIsecond}). Again, this enables us to categorize
polynomials into
classes, eight of them for an involution of the first kind and
four of them for an involution of the second kind (Theorems
\ref{cor:T2star} and \ref{thm:T2invo}).

The tracial Nullstellens\"atze mentioned above 
are deduced easily
from these results, cf.~cases (i) and (iii) above.
We revisit and reinterpret them in the setting of algebras
of generic matrices in the last section: an element of such an algebra
is a sum of commutators if and only if its trace is zero (recall that the
algebra
of generic matrices is a subalgebra of a matrix algebra over a polynomial
algebra and thus naturally equipped with a trace).

\section{The Lie Structure of Polynomial Values}

Let us fix the  notation that will be used throughout the paper. 
By  $\FF$ we denote  a field, and all our algebras will be algebras over $\FF$. Let $\A$ be an (associative) algebra. By $\Z$ we denote its center. If $\A$ is a $\ast$-algebra, i.e., an algebra with involution $\ast$, then by $\cS$ (resp.~$\K$) we denote   the set of all symmetric (resp.~skew-symmetric) elements in $\A$:
$$
\cS=\{a\in\A\mid a^* = a\},\quad
\K=\{a\in\A\mid a^* = -a\}.
$$
The advantage of this notation is brevity, but the reader should be warned against possible confusion. Let us point out that  $\cS$ and $\K$ depend on the involution; we will have the opportunity to consider different involutions on the same algebra $\A$ (cf.~Lemmas \ref{lmatriket} and \ref{lmatrikes}), so 
$\cS$ and $\K$ might differ from case to case.

\subsection{The involution-free case}

By $\FF\langle \bar{X}\rangle$ we denote the free algebra generated by $\bar{X} = \{X_1,X_2,\ldots\}$, i.e., the algebra of all polynomials in noncommuting variables $X_i$. 
Let  $\A$ be an algebra over $\FF$, and let $f = f(X_1,\ldots,X_n)\in \FF\langle \bar{X}\rangle$.
If $\cL_1,\ldots,\cL_n$ are subsets of $\A$, then by $f(\cL_1,\ldots,\cL_n)$ we denote the set of all values $f(a_1,\ldots,a_n)$ with $a_i\in \cL_i$, $i=1,\ldots,n$. If all $\cL_i$ are equal to $\A$,  then we simplify the notation and write $f(\A)$ instead of $f(\A,\ldots,\A)$. If $\U$ is a subset of $\A$, then by $\Span \U$ we denote the linear span of $\U$. 
One of the goals of this paper  is to describe $\Span f(\A)$ for all  polynomials $f$ and certain algebras $\A$. 
Of course it can happen that  $\Span f(\A)=0$ even when $f\ne 0$; such a polynomial $f$ is called a 
{\em $($polynomial$)$
identity} of $\A$. Algebras satisfying (nontrivial) polynomial identities are called {\em PI algebras}. This class of algebras includes all finite dimensional  algebras.

We say that a polynomial $f = f(X_1,\ldots,X_n)\in \FF\langle \bar{X}\rangle$ is {\em homogeneous in $X_i$} if each monomial of $f$
 has the same degree  with respect to $X_i$; if this degree is $1$, then we say that $f$ is {\em linear in $X_i$}. Further, we say that $f$ is
{\em multihomogeneous}  if it is  homogeneous in every $X_i$, $i=1,\ldots,n$. Every polynomial is a sum of multihomogeneous polynomials. A polynomial is said to be {\em multilinear}   if it is linear in every
$X_i$, $i=1,\ldots,n$. Thus, a multilinear polynomial in $X_1,\ldots,X_n$ is a linear combination of monomials of the form $X_{\sigma(1)}\ldots X_{\sigma(n)}$ where $\sigma$ is a permutation of $\{1,\ldots,n\}$. From the identity
\begin{align*}
[X_{\sigma(1)}\ldots X_{\sigma(n)},X_{n+1}] &= [X_{\sigma(1)},X_{n+1}]X_{\sigma(2)}\ldots X_{\sigma(n)}\\
+ X_{\sigma(1)} [X_{\sigma(2)},X_{n+1}]X_{\sigma(3)}\ldots X_{\sigma(n)} &+ \ldots +X_{\sigma(1)}\ldots
 X_{\sigma(n-1)}[X_{\sigma(n)},X_{n+1}]
\end{align*}
it follows easily that every multilinear polynomial $h$ satisfies
(cf.~[BCM, p.~170])
\begin{align}\label{e1}
\begin{split}
&[h(X_1,\ldots,X_n),X_{n+1}] = h([X_1,X_{n+1}],X_2,\ldots,X_n)\\
 & + h(X_1,[X_2,X_{n+1}],X_3,\ldots,X_n) + \ldots
+ h(X_1,\ldots,X_{n-1},[X_n,X_{n+1}]).
\end{split}
\end{align}

In order to state our first theorem we  have to recall a definition and record an elementary lemma which will be used frequently in the sequel.

\begin{definition}
An algebra $\A$ endowed with the {\it Lie product}
$$
[x,y]:=xy-yx \quad \text{for } x,y\in\A
$$
is a Lie algebra and the ideals of $\A$ with respect to this product
are called {\it Lie ideals} of $\A$.
\end{definition}

Thus, a Lie ideal of $\A$ is a linear subspace $\cL$ of $\A$ such that $[\cL,\A]\subseteq\cL$.

\begin{lemma} \label{R1}
Let $\V$ be a linear space over $\FF$, and let $\U$ be its subspace. Suppose that $c_0,c_1,\ldots,c_n\in \V$ are such that
\begin{equation}\label{eq:R1}
\sum_{i=0}^n \lambda^i c_i \in \U
\end{equation}
holds for at least $n+1$ different scalars $\lambda$. Then each $c_i\in \U$.
\end{lemma}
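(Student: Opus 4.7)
The plan is to reduce the assertion to a standard Vandermonde argument. First I would pick $n+1$ distinct scalars $\lambda_0,\lambda_1,\ldots,\lambda_n\in\FF$ for which the hypothesis \eqref{eq:R1} applies, obtaining the $n+1$ elements
$$
v_j := \sum_{i=0}^n \lambda_j^i\, c_i \in \U,\qquad j=0,1,\ldots,n.
$$
Viewing this as a linear system with unknowns $c_0,\ldots,c_n$, the coefficient matrix is the Vandermonde matrix $(\lambda_j^i)_{0\le i,j\le n}$, whose determinant $\prod_{j<k}(\lambda_k-\lambda_j)$ is nonzero precisely because the $\lambda_j$ are distinct.

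Next I would invert this matrix over $\FF$ to produce, for each $i$, scalars $\alpha_{ij}\in\FF$ such that $c_i=\sum_{j=0}^n \alpha_{ij}\,v_j$. Since each $v_j$ lies in the subspace $\U$ and $\U$ is closed under $\FF$-linear combinations, this exhibits $c_i$ as an element of $\U$, which is the claim.

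The argument has no real obstacle; the only point to keep in mind is that one must invoke the hypothesis for $n+1$ distinct scalars simultaneously, which is permissible since the hypothesis guarantees at least that many. (One can avoid speaking of matrix inversion by the usual Lagrange interpolation trick: for each fixed $i$, the polynomial $\prod_{k\ne i}\frac{t-\lambda_k}{\lambda_i-\lambda_k}$ evaluated at $t=\lambda_j$ is the Kronecker delta $\delta_{ij}$, and taking the corresponding $\FF$-linear combination of the $v_j$ isolates $c_i$.) Either formulation makes the conclusion immediate.
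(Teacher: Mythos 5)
Your argument is correct and is essentially the same as the paper's: both hinge on the invertibility of the Vandermonde matrix $(\lambda_j^i)$ for distinct $\lambda_j$. The only cosmetic difference is that the paper passes to the quotient $\V/\U$ and solves a homogeneous system there, whereas you invert the matrix directly to write each $c_i$ as an $\FF$-linear combination of elements of $\U$.
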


\begin{proof}
Let
$\lambda_{\ell}\in\FF$, $\ell=0,\ldots,n$, be different elements in $\FF$ satisfying \eqref{eq:R1}. 
Then 
\begin{equation}\label{eq:vandermonde}
\sum_{i=0}^n \lambda_\ell^i \overline c_i=0
\end{equation}
in the vector space $\V/\U$, where $v\mapsto\overline v$ denotes
the quotient mapping $\V\to \V/\U$.
The system \eqref{eq:vandermonde}$_{\ell=0,\ldots,n}$ can be equivalently written
in matrix form as
$$
\begin{bmatrix}
1 & \lambda_0 & \cdots & \lambda_0^n \\
\vdots& \vdots & \ddots & \vdots \\
1 & \lambda_n & \cdots & \lambda_n^n
\end{bmatrix}
\begin{bmatrix}
\overline c_0 \\ \vdots\\ \overline c_n
\end{bmatrix}
=\begin{bmatrix}
0 \\ \vdots\\ 0
\end{bmatrix}.
$$
The Vandermonde matrix on the left hand side 
is invertible as its determinant
is 
$$
\prod_{0\leq i< j\leq n} (\lambda_i-\lambda_j).
$$
Thus
$\overline c_i=0$, i.e., $c_i\in \U$ for all $i$.
\end{proof}

\begin{theorem} \label{T1}
Let $\FF$ be an infinite field, let $\A$ be an $\FF$-algebra, and let $\cL_1,\ldots,\cL_n$ be Lie ideals of $\A$. Then
for every $f = f(X_1,\ldots,X_n) \in \FF\langle \bar{X}\rangle$,  $\Span f(\cL_1,\ldots,\cL_n)$  is again a Lie ideal of $\A$.
\end{theorem}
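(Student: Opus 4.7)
The plan is to show that $[f(a_1,\ldots,a_n),b]\in\Span f(\cL_1,\ldots,\cL_n)$ for every choice of $a_i\in\cL_i$ and $b\in\A$. Because $\Span f(\cL_1,\ldots,\cL_n)$ is a linear subspace spanned by elements of the form $f(a_1,\ldots,a_n)$, this implies $[\Span f(\cL_1,\ldots,\cL_n),\A]\subseteq\Span f(\cL_1,\ldots,\cL_n)$, which is the Lie ideal condition.

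The main device is perturbation combined with polynomial interpolation. Since each $\cL_i$ is a Lie ideal, $[a_i,b]\in\cL_i$, and being a linear subspace $\cL_i$ also contains $a_i+t[a_i,b]$ for every scalar $t\in\FF$. I would therefore introduce the $\A$-valued polynomial in the central variable $t$,
$$G(t):=f\bigl(a_1+t[a_1,b],\,\ldots,\,a_n+t[a_n,b]\bigr),$$
which by construction satisfies $G(t)\in f(\cL_1,\ldots,\cL_n)\subseteq\Span f(\cL_1,\ldots,\cL_n)$ for every $t\in\FF$. Writing $G(t)=\sum_{k=0}^N t^k G_k$ with $G_k\in\A$ and exploiting that $\FF$ is infinite, Lemma~\ref{R1} forces each coefficient $G_k$ into $\Span f(\cL_1,\ldots,\cL_n)$; in particular so does $G_1$.

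It remains to identify $G_1$ with $[f(a_1,\ldots,a_n),b]$. Since $\delta_b(x):=[x,b]$ is a derivation of $\A$, expanding $(a_{i_1}+t[a_{i_1},b])\cdots(a_{i_k}+t[a_{i_k},b])$ for a single monomial $X_{i_1}\cdots X_{i_k}$ shows that the coefficient of $t^1$ equals
$$\sum_{j=1}^{k}a_{i_1}\cdots a_{i_{j-1}}[a_{i_j},b]\,a_{i_{j+1}}\cdots a_{i_k}=\delta_b(a_{i_1}\cdots a_{i_k})=[a_{i_1}\cdots a_{i_k},b].$$
Summing over the monomials of $f$ by linearity yields $G_1=[f(a_1,\ldots,a_n),b]$, which therefore lies in $\Span f(\cL_1,\ldots,\cL_n)$ as desired.

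I do not foresee any serious obstacle with this approach: it is essentially a one-variable Taylor-style expansion combined with a Vandermonde argument. The only genuinely indispensable hypotheses are (i) that $\FF$ be infinite, so that Lemma~\ref{R1} can be invoked to separate the coefficients of $G(t)$, and (ii) that each $\cL_i$ be a Lie ideal and a subspace, which ensures that the perturbed arguments $a_i+t[a_i,b]$ remain inside $\cL_i$ for every $t\in\FF$.
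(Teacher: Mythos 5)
Your proof is correct. The key identity $G_1=\delta_b(f(a_1,\ldots,a_n))=[f(a_1,\ldots,a_n),b]$ holds exactly as you say, because $\delta_b$ is a derivation and the first-order term of the simultaneous perturbation $a_i\mapsto a_i+t\,\delta_b(a_i)$ of a product is precisely $\delta_b$ of that product; and $a_i+t[a_i,b]\in\cL_i$ for all $t$ is exactly where the Lie ideal hypothesis enters. Your route is genuinely more streamlined than the paper's. The paper first uses Lemma~\ref{R1} to reduce to multihomogeneous $f$, then writes $f$ as a specialization of a multilinear polynomial $h$, applies Lemma~\ref{R1} again to a one-variable perturbation $a_1\mapsto a_1+\lambda a_1'$ to show that the ``polarization'' $c_1$ lies in the span, and finally invokes the multilinear identity \eqref{e1} to reassemble $[f(a_1,\ldots,a_n),b]$ from such polarizations, one variable at a time. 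You collapse all of this into a single perturbation in one scalar parameter $t$ (moving all arguments at once along the inner derivation $\delta_b$) and a single application of Lemma~\ref{R1}; the Leibniz rule does the work that \eqref{e1} does in the paper. What the paper's longer route buys is the intermediate facts it establishes along the way (that the values of the multihomogeneous components and of the polarizations $c_1$ already lie in $\Span f(\cL_1,\ldots,\cL_n)$), which are reused in the proofs of Theorems~\ref{prop:T1} and \ref{T2}; your argument proves the theorem itself more economically but does not record those byproducts.
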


\begin{proof}  We can write $f=f_0+f_1+\ldots +f_m$ where  $f_i$ is the sum of all monomials of $f$ that have degree $i$ in $X_1$.
Note that
$$f(\lambda a_1,a_2,\ldots,a_n) =  \sum_{i=0}^m \lambda^i f_i(a_1,\ldots,a_n) \in \Span f(\cL_1,\ldots,\cL_n)$$
 for all $\lambda\in \FF$ and all $a_i\in \cL_i$, and so $ f_i(a_1,\ldots,a_n) \in \Span f(\cL_1,\ldots,\cL_n)$ by Lemma \ref{R1}. Repeating the same argument with respect to other variables we see that values of  each of the multihomogeneous components of $f$ lie in   $\Span f(\cL_1,\ldots,\cL_n)$. But then there is no loss of generality in assuming that $f$ itself is multihomogeneous. Accordingly, we can write
$$
f= h(X_1,\ldots,X_1,X_2,\ldots,X_2,\ldots,X_n,\ldots,X_n)
$$
where $h\in \FF\langle \bar{X}\rangle$ is multilinear, $X_1$ appears $k_1$ times, $X_2$ appears $k_2$ times, etc.
Considering $f(a_1 + \lambda a_1',a_2,\ldots,a_n)$ we thus arrive at the relation $\sum_{i=0}^{k_1} \lambda^i c_i \in  \Span f(\cL_1,\ldots,\cL_n)$, where, in particular,
\begin{align*}
c_1 &= h(a_1',a_1,\ldots,a_1,a_2,\ldots,a_2,\ldots,a_n,\ldots,a_n)\\
 &+ h(a_1,a_1',a_1,\ldots,a_1, a_2,\ldots,a_2,\ldots,a_n,\ldots,a_n)\\
 &+\ldots + h(a_1,\ldots,a_1,a_1', a_2,\ldots,a_2,\ldots,a_n,\ldots,a_n).
\end{align*}
By Lemma \ref{R1}, each $c_i$, including of course $c_1$, belongs to $\Span f(\cL_1,\ldots,\cL_n)$; here, $a_1,a_1'\in \cL_1$ $a_2\in \cL_2,\ldots,a_n\in \cL_n$ are arbitrary elements. Similar statements can be established with respect to other variables.

 Now,
using \eqref{e1} we see that for all $a_i\in \cL_i$ and $b\in \A$ we have
\begin{align*}
[f(a_1,\ldots,a_n),b] &= h([a_1,b],a_1,\ldots,a_1,a_2,\ldots,a_2,\ldots,a_n,\ldots,a_n)\\
+ \ldots &+ h(a_1,\ldots,a_1,[a_1,b],a_2,\ldots,a_2,\ldots,a_n,\ldots,a_n)
\\
+ \ldots &+
 h(a_1,\ldots,a_1,[a_2,b],a_2,\ldots,a_2,\ldots,a_n,\ldots,a_n)\\
 + \ldots &+
 h(a_1,\ldots,a_1,a_2,\ldots,a_2,[a_2,b],\ldots,a_n,\ldots,a_n)\\
+ \ldots &+
 h(a_1,\ldots,a_1,a_2,\ldots,a_2,\ldots,[a_n,b],a_n\ldots,a_n)\\
+ \ldots &+
 h(a_1,\ldots,a_1,a_2,\ldots,a_2,\ldots,a_n\ldots,a_n,[a_n,b]).
\end{align*}
Let us point out that $[a_i,b]\in \cL_i$ since $\cL_i$ is a Lie ideal of $\A$.
In view of the above observation $c_1\in  \Span f(\cL_1,\ldots,\cL_n)$ it follows that the sum of the first $k_1$ summands that involve $[a_1,b]$ lies in  $\Span f(\cL_1,\ldots,\cL_n)$. Similarly we see that the sum of summands  involving $[a_2,b]$ lies in  $\Span f(\cL_1,\ldots,\cL_n)$, etc. Accordingly, $[f(a_1,\ldots,a_n),b]\in  \Span f(\cL_1,\ldots,\cL_n)$, proving that   $\Span f(\cL_1,\ldots,\cL_n)$ is a Lie ideal of $\A$.
\end{proof}

A very special case of Theorem \ref{T1}, where $f = [[X_1,X_2],X_2]$, was noticed in the recent paper \cite[Lemma 4.6]{BKS} as an auxiliary, but important result needed for describing Lie ideals of tensor products of algebras.

\subsection{The involution case}

For dealing with polynomial values in
algebras with involution we introduce the analogue
of a free algebra in the category of algebras with involution.
Let $\FF$ be a field with an involution $\ast$.
By $\FF\langle \bar{X},\bar X^*\rangle$ we denote the free $*$-algebra 
over $\FF$
generated by $\bar{X} = \{X_1,X_2,\ldots\}$, i.e., 
the $\FF$-algebra of all polynomials in noncommuting variables $X_i,X_j^*$. Further, by $\Sym$ we denote the set of all symmetric, and by
 $\Skew$ we denote  the
set of all skew-symmetric polynomials in $\FF\langle\bar{X},\bar X^*\rangle$ (with respect to the canonical involution, of course). By the {\em degree} of $X_i$ in a monomial $M\in \FF\langle\bar{X},\bar X^*\rangle$ we shall mean the number of appearances of  $X_i$ {\em or} $X_i^*$ in $M$. For example, both $X_1^2$ and $X_1X_1^*$ have degree $2$ in $X_1$.     
The concepts of (multi)homogeneity
and (multi)linearity of polynomials in $\FF\langle\bar{X},\bar X^*\rangle$ are defined accordingly. For example,
$X_1X_2X_1^* + X_2^*X_1^2$ is multihomogeneous and linear in $X_2$.

   Let  $\A$ be an algebra with involution $\ast$
    and let $f = f(X_1,\ldots,X_n,X_1^*,
\ldots, X_n^*)\in \FF\langle \bar{X},\bar X^*\rangle$.
If $\cL_1,\ldots,\cL_n$ are 
subsets of $\A$, then by $f(\cL_1,\ldots,\cL_n)$ we denote the set of all values $f(a_1,\ldots,a_n,a_1^*,\ldots,a_n^*)$ 
   with $a_i\in \cL_i$, $i=1,\ldots,n$. Again, if   $\cL_i=\A$ for every $i$, then we simply write $f(\A)$ instead of $f(\A,\ldots,\A)$.

Theorem \ref{T1} does not hold for polynomials in  $\FF\langle \bar{X},\bar X^*\rangle$. For example, if $f= X_1+X_1^*$, then (assuming $\Char (\FF)\ne 2$) $f(\A) = \cS$ and so $\Span f(\A)$ is only exceptionally a Lie ideal of $\A$. However, it does satisfy a weaker version of the definition of a Lie ideal: while it is, in general, not closed under commutation with elements from $\cS$, it is certainly closed under commutation with elements from $\K$ since 
$[\cS,\K]\subseteq \cS$. Subspaces satisfying this property will be one of the central topics of this paper.

\begin{definition}
A linear subspace $\cL$ of an algebra $\A$ with involution will be called a {\em Lie  skew-ideal} of $\A$ if 
$[\cL,\K]\subseteq\cL$.
\end{definition}

\begin{theorem} \label{prop:T1}
Let $\FF$ be an infinite field with $\Char (\FF)\ne 2$, let $\A$ be an $\FF$-algebra with
involution, and let $\cL_1,\ldots,\cL_n$ be Lie skew-ideals of $\A$. Then
for every  $f = f(X_1,\ldots,X_n,X_1^*,\ldots,X_n^*) \in 
\FF\langle \bar{X},\bar X^* \rangle$,  $\Span f(\cL_1,\ldots,\cL_n)$  is again a Lie skew-ideal of $\A$.
\end{theorem}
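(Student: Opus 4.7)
The plan is to imitate the proof of Theorem~\ref{T1} using a single $*$-compatible polarization that avoids the multilinear reduction. Let $\FF_0:=\{\lambda\in\FF\mid \lambda^*=\lambda\}$ be the fixed subfield of $\FF$; since $\Char(\FF)\ne 2$ forces $[\FF:\FF_0]\le 2$, infiniteness of $\FF$ forces infiniteness of $\FF_0$. Fix $b\in\K$, elements $a_i\in\cL_i$, and set $V:=\Span f(\cL_1,\ldots,\cL_n)$. The goal is to show
$$
[f(a_1,\ldots,a_n,a_1^*,\ldots,a_n^*),\,b]\in V.
$$

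The crucial compatibility observation is the identity $[a,b]^*=[a^*,b]$ for $b\in\K$, which follows at once from $b^*=-b$. Combined with the Lie skew-ideal property $[a_i,b]\in\cL_i$ (applied to $b\in\K$), this shows that for every $\lambda\in\FF_0$ the perturbed element $a_i+\lambda[a_i,b]$ lies in $\cL_i$ and has adjoint $a_i^*+\lambda[a_i^*,b]$. Consequently,
$$
g_i(\lambda):= f\bigl(a_1,\ldots,a_i+\lambda[a_i,b],\ldots,a_n,\,a_1^*,\ldots,a_i^*+\lambda[a_i^*,b],\ldots,a_n^*\bigr)\in V
$$
for every $\lambda\in\FF_0$. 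As a function of $\lambda$, $g_i$ is a polynomial of degree at most $\deg_{X_i}f$; since $\FF_0$ is infinite, Lemma~\ref{R1} yields that every coefficient of $g_i$ belongs to $V$, in particular the coefficient $c_i$ of $\lambda^1$.

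It then remains to identify $\sum_{i=1}^n c_i$ with $[f(a_1,\ldots,a_n,a_1^*,\ldots,a_n^*),b]$. This is a monomial-by-monomial verification using the Leibniz rule: for any monomial $M$ in $\bar X,\bar X^*$, the commutator $[M(\bar a,\bar a^*),b]$ equals the sum over all letter positions in $M$ of the monomial obtained by commuting that single letter with $b$, while the linear coefficient of $g_i$ applied to $M$ produces exactly this sum restricted to positions whose letter is $a_i$ or $a_i^*$. Summing over $i$ accounts for every position exactly once, so $[f(\bar a,\bar a^*),b]=\sum_i c_i\in V$, proving that $V$ is a Lie skew-ideal. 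The main (and only) subtlety is keeping substitutions compatible with the involution, which is why scaling is restricted to $\FF_0$; everything else is a direct adaptation of the scheme used in Theorem~\ref{T1}.
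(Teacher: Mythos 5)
Your proof is correct. It rests on the same two pillars as the paper's argument --- the Vandermonde Lemma~\ref{R1} applied over the symmetric subfield $\FF_0$, and the compatibility identity $[a,b]^*=[a^*,b]$ for $b\in\K$ --- but it is organized differently. The paper first reduces to multihomogeneous polynomials, then rewrites such a polynomial as a multilinear $h$ in repeated variables, and finally invokes a $*$-analogue of the identity \eqref{e1} for $[h,X_{n+1}-X_{n+1}^*]$, which in turn requires noting that every element of $\K$ has the form $a-a^*$. Your single polarization $a_i\mapsto a_i+\lambda[a_i,b]$ (with the starred slot receiving exactly the adjoint, which is where the restriction $\lambda\in\FF_0$ is genuinely needed) bypasses all of that: you never pass to multihomogeneous components or to a multilinear $h$, you work with an arbitrary $b\in\K$ directly, and the identification $\sum_i c_i=[f(\bar a,\bar a^*),b]$ is just the Leibniz rule for the derivation $x\mapsto[x,b]$ checked monomial by monomial. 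What your route buys is a shorter, more self-contained argument (and it makes transparent that the only role of $\Char(\FF)\ne 2$ beyond the paper's conventions is to keep the setting nondegenerate); what the paper's route buys is that the multihomogeneous reduction is reused verbatim in later proofs (Theorems~\ref{T2} and~\ref{thm:T2invo}), so the authors get it once and for all. One cosmetic remark: $[\FF:\FF_0]\le 2$ holds because the involution restricted to $\FF$ is an automorphism of order at most $2$, not because $\Char(\FF)\ne 2$; the conclusion you draw from it (infiniteness of $\FF_0$) is of course still valid.
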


\begin{proof}
The proof is almost the same as the proof of Theorem \ref{T1}, so we only point out the necessary modifications.

The first part of the proof based on applications of Lemma \ref{R1} is literally the same, except that instead of scalars in $\FF$ one should deal with scalars from the subfield 
$\FF_0$ of all symmetric elements of $\FF$.
Since $[\FF:\FF_0]\le 2$, $\FF_0$ is also an infinite field, and so all arguments still work.

Let  $h=h(X_1,\ldots,X_n,X_1^*,\ldots,X_n^*) \in 
\FF\langle \bar{X},\bar X^* \rangle$ be a multilinear polynomial. The formula 
 \eqref{e1} does not hold for $h$ (not even if $h=X_1^*$). However, using $[X_i^*,X_{n+1}-X_{n+1}^*] = [X_i, X_{n+1}-X_{n+1}^*]^*$
one easily derives the following analogous formula
\begin{align*}
\begin{split}
&[h(X_1,\ldots,X_n,X_1^*,\ldots,X_n^*),X_{n+1}-X_{n+1}^*]\\
 =&  h([X_1,X_{n+1}-X_{n+1}^*],X_2,\ldots,X_n,[X_1,X_{n+1}-X_{n+1}^*]^*,X_2^*,\ldots,X_n^*)\\
  +& h(X_1,[X_2,X_{n+1}-X_{n+1}^*],X_3,\ldots,X_n,X_1^*,[X_2,X_{n+1}-X_{n+1}^*]^*,X_3^*,\ldots,X_n^*)\\ 
 +& \ldots+ h(X_1,\ldots,X_{n-1},[X_n,X_{n+1}-X_{n+1}^*],X_1^*,\ldots,X_{n-1}^*,[X_n,X_{n+1}-X_{n+1}^*]^*).
\end{split}
\end{align*}
Using this the proof is just a simple adaptation of the proof of  Theorem \ref{T1}. Here one also has to note that
every element in $\K$ is of the form $a-a^*$, $a\in\A$. 
\end{proof}

\section{Lie Ideals and Lie Skew-Ideals} \label{sec3}

The aim of this section is to describe Lie ideals and Lie skew-ideals in prime PI algebras. Let us recall that an algebra is said to be {\em prime} if the product of any of its two nonzero ideals is always nonzero. Prime PI algebras can be embedded (in a particularly nice way) into finite dimensional  central simple algebras (over a certain field extension of the base field), so in our arguments we shall mostly deal with the latter. In fact, dealing only with finite dimensional  simple algebras would not make arguments much more complicated, but would simplify somewhat the exposition. Anyhow, we have decided to consider prime PI algebras because of applications in Section \ref{sec4} and because of the fact that there exist important examples of such algebras that are not simple - for instance, the algebra of generic matrices considered in Section \ref{sec5}.

The concept of a Lie ideal is a classical one, and the result obtained in the first subsection below is not particularly surprising. The bulk of the section is devoted to Lie skew-ideals.

\subsection{Lie ideals in prime PI algebras}
The following result is folklore.

\begin{lemma} \label{Lfolk}
Let $\A=M_d(\FF)$, $d\ge 2$, and suppose that  $d\ne 2$ or $\Char(\FF)\ne 2$. Then $\A$ has exactly four Lie ideals: $0$, $\Z$, $[\A,\A]$  and $\A$.
\end{lemma}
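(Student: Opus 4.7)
The plan is first to verify that the four listed subspaces are Lie ideals, then to show no others exist. The verification is immediate: $0$ and $\A$ are trivial, $\Z$ is a Lie ideal since $[\Z,\A]=0$, and $[\A,\A]$ is one because $[[\A,\A],\A]\subseteq[\A,\A]$. Under the stated hypotheses, the four are pairwise distinct.

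Since $\Z=\FF\cdot\id$ is $1$-dimensional and, as is standard, $[\A,\A]$ coincides with the codimension-$1$ subspace of trace-zero matrices, it suffices to establish the dichotomy: \emph{every Lie ideal $\cL$ of $\A$ satisfies either $\cL\subseteq\Z$ or $\cL\supseteq[\A,\A]$}. Granting this, the only Lie ideals inside $\Z$ are $0$ and $\Z$, and the only subspaces of $\A$ containing $[\A,\A]$ are $[\A,\A]$ and $\A$, giving exactly four possibilities.

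To prove the dichotomy I take $a\in\cL\setminus\Z$ and aim to extract an off-diagonal matrix unit into $\cL$. Bracketing $a$ against a diagonal idempotent $E_{ii}$ and iterating yields, when $\Char(\FF)\neq 2$, the individual off-diagonal row and column contributions of $a$ at index $i$ via the two combinations $[E_{ii},a]\pm[E_{ii},[E_{ii},a]]$. When $\Char(\FF)=2$, the hypothesis $d\geq 3$ furnishes a third index $k\notin\{i,j\}$ from which a modified three-index bracketing argument still isolates an off-diagonal entry. Either way, a subsequent commutator produces some $E_{rs}\in\cL$ with $r\neq s$.

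Finally I propagate this matrix unit: given $E_{rs}\in\cL$ with $r\neq s$, the identities $[E_{pr},E_{rs}]=E_{ps}$ (for $p\neq s$) and $[E_{rs},E_{st}]=E_{rt}$ (for $t\neq r$) place every off-diagonal matrix unit into $\cL$, and $[E_{rs},E_{sr}]=E_{rr}-E_{ss}$ yields all trace-zero diagonal differences. These elements span $[\A,\A]$, so $\cL\supseteq[\A,\A]$. The principal obstacle is the extraction step: characteristic $2$ requires extra care, and the excluded case $d=2$, $\Char(\FF)=2$ is precisely where such arguments break down---genuinely new Lie ideals (e.g.\ $\Span\{E_{12},\id\}$) appear there.
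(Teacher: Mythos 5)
Your proof is correct. The paper itself gives no argument: it calls the lemma folklore, notes it "can be proved by a direct computation," and then derives it from Herstein's general theorem that a Lie ideal of a simple ring either contains $[\A,\A]$ or lies in the center. What you have written is exactly the direct computation the paper waves at, organized around the same dichotomy ($\cL\subseteq\Z$ or $\cL\supseteq[\A,\A]$) that Herstein's theorem supplies as a black box; your reduction of the dichotomy to the full classification (via $\dim\Z=1$ and $\mathrm{codim}\,[\A,\A]=1$) and your propagation of a single off-diagonal matrix unit to all of $[\A,\A]$ are both standard and correct. The one place you are thinner than you should be is the characteristic-$2$ extraction: "a modified three-index bracketing argument" is an IOU rather than a proof. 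It does go through --- for $a\in\cL$ one has $[E_{jj},[E_{ii},a]]=a_{ij}E_{ij}-a_{ji}E_{ji}\in\cL$ in any characteristic, and if some $a_{ij}\neq 0$ with $i\neq j$, bracketing with $E_{jk}$ for $k\notin\{i,j\}$ (here $d\geq 3$ is used) yields $a_{ij}E_{ik}\in\cL$; if instead $a$ is diagonal and non-scalar, $[a,E_{ij}]=(a_{ii}-a_{jj})E_{ij}$ already does the job --- but you should write this out, since it is precisely the step that distinguishes the allowed cases from the genuinely exceptional one $d=2$, $\Char(\FF)=2$ (where your example $\Span\{E_{12},\id\}$ is indeed a fifth Lie ideal). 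Compared with the paper's citation of Herstein, your route is more elementary and self-contained but specific to $M_d(\FF)$; the paper's buys generality (arbitrary simple rings) at the cost of an external reference.
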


Here, the center $\Z$ is equal to $\FF$, the set of all scalar matrices, and $[\A,\A]$  is the set of all commutators
$[A,B]$, $A,B \in \A$, or equivalently, the set of all matrices with zero trace.

A general remark about notation: if $\U$ and $\V$ are subspaces of an algebra $\A$, then by $[\U,\V]$ we denote the linear span of all commutators $[u,v]$, $u\in\U$, $v\in\V$. By chance in the case of $\A=M_d(\FF)$ the linear space $[\A,\A]$ coincides with the set of all commutators $[A,B]$ \cite{Sho,AM}, but in general this is not true.

One can prove Lemma \ref{Lfolk} by a direct computation. On the other hand, the lemma follows immediately from a substantially more general result by Herstein \cite[Theorem 1.5]{H2} stating that under very mild assumptions a Lie ideal of a simple algebra  $\A$ either contains $[\A,\A]$ or is contained in $\Z$.
We also remark that the case when $d=2$ and $\Char(\FF)=2$ is really exceptional, see \cite[p. 6]{H2}.

Now assume that  $\A$ is a prime PI algebra.  Then $\Z\ne 0$ and the {\em central closure} (i.e., a central localization, also called the algebra of central quotients \cite[\S 1.7]{Rowen}) $\wt{\A}$ of $\A$  consists of  elements of the form $z^{-1}a$ where $a\in \A$ and $0\ne z\in\Z$. Furthermore, $\wt{\A}$  is a finite dimensional central simple algebra over the field of fractions $\wt{\Z}$ of $\Z$. This is a version of Posner's theorem together with Rowen's sharpening, see for example
\cite[Theorem 1.7.9]{Rowen}. Of course, $\wt{\Z}$ is a field extension of $\FF$ and so they have the same characteristic. Given a subset $\V$ of $\A$, we shall write  $\wt{\V}$ for the linear span of $\V$ over $\wt{\Z}$.

\begin{proposition} \label{ppip}
Let $\A$ be a prime $\FF$-algebra such that $\dim_{\wt{\Z}}\wt{\A}\ne 4$ or  $\Char(\FF)\ne 2$. If $\cL$ is a Lie ideal of $\A$, then $\wt{\cL}$ is either $0$, $\wt{\Z}$, $[\wt{\A},\wt{\A}]$ or $\wt{\A}$.
\end{proposition}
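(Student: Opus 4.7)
The plan is to reduce everything to the structure theorem for Lie ideals of central simple algebras already invoked in the discussion of Lemma~\ref{Lfolk}. By the version of Posner's theorem cited just before the statement, $\wt{\A}$ is a finite dimensional central simple algebra over the field $\wt{\Z}$, and $\wt{\Z}$ has the same characteristic as $\FF$, so the hypothesis becomes the ``avoid $\mathrm{char}\,2$ and $2\times 2$ matrices'' condition at the level of $\wt{\A}$. My first step would be to verify that $\wt{\cL}$ is a Lie ideal of $\wt{\A}$. After clearing denominators, every element of $\wt{\cL}$ can be written as $z^{-1}l$ with $z\in\Z\setminus\{0\}$ and $l\in\cL$, and every element of $\wt{\A}$ as $w^{-1}a$ with $w\in\Z\setminus\{0\}$, $a\in\A$. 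Since $z$ and $w$ are central,
$$
[z^{-1}l,\, w^{-1}a] \;=\; (zw)^{-1}[l,a] \;\in\; \wt{\Z}\cdot\cL \;=\; \wt{\cL},
$$
because $[l,a]\in\cL$.

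Next I would invoke Herstein's theorem quoted in the paragraph following Lemma~\ref{Lfolk}: under our hypothesis $\dim_{\wt{\Z}}\wt{\A}\ne 4$ or $\Char(\wt{\Z})\ne 2$ (which avoids precisely the exceptional case mentioned in \cite[p.~6]{H2}), every Lie ideal of the simple algebra $\wt{\A}$ either is contained in $\wt{\Z}$ or contains $[\wt{\A},\wt{\A}]$. In the first case, $\wt{\cL}$ is a $\wt{\Z}$-subspace of $\wt{\Z}$, which is one-dimensional over itself, so $\wt{\cL}$ is either $0$ or $\wt{\Z}$. In the second case I would appeal to the standard fact that for a finite dimensional central simple algebra the commutator subspace $[\wt{\A},\wt{\A}]$ has codimension one in $\wt{\A}$ (it is the kernel of the reduced trace); consequently any $\wt{\Z}$-subspace of $\wt{\A}$ containing $[\wt{\A},\wt{\A}]$ is either $[\wt{\A},\wt{\A}]$ itself or all of $\wt{\A}$.

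The main obstacle worth commenting on is the codimension-one claim for $[\wt{\A},\wt{\A}]$; it is standard but nontrivial. The quickest route is to extend scalars to an algebraic closure $\overline{\wt{\Z}}$, where $\wt{\A}\otimes\overline{\wt{\Z}}\cong M_n(\overline{\wt{\Z}})$ and the commutator space becomes the trace-zero matrices, of codimension one. Since the formation of commutator subspaces commutes with (flat) scalar extension, the same codimension holds over $\wt{\Z}$. Once this is in hand, the four cases listed in the statement are exhausted, and the characteristic/dimension hypothesis enters the argument only to sidestep Herstein's $2\times 2$-in-characteristic-$2$ exception.
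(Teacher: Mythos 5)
Your argument is correct, but it follows a different path from the paper's. The paper first passes to the algebraic closure $\ov{\Z}$ of $\wt{\Z}$, identifies $\ov{\A}=\wt{\A}\otimes_{\wt{\Z}}\ov{\Z}$ with $M_d(\ov{\Z})$, applies the explicit classification of Lemma~\ref{Lfolk} there, and then descends using the observation that $\wt{\Z}$-subspaces $\W,\V$ with $\W\otimes\ov{\Z}=\V\otimes\ov{\Z}$ must coincide; the entire classification is thus obtained over $\ov{\Z}$ and pulled back. You instead invoke Herstein's dichotomy directly on the central simple algebra $\wt{\A}$ over $\wt{\Z}$ and finish each branch by a dimension count, using scalar extension only to compute that $[\wt{\A},\wt{\A}]$ has codimension one. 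Both routes are legitimate, and they are close relatives, since the paper itself derives Lemma~\ref{Lfolk} from the very theorem of Herstein you quote; your version avoids the descent lemma for general subspaces at the cost of leaning on Herstein's theorem for arbitrary simple rings (rather than just matrix algebras over a field), and your hypothesis bookkeeping is right, as the excluded case $\Char=2$ with $\dim_{\wt{\Z}}\wt{\A}=4$ is exactly Herstein's exception. Two minor points you should tighten: your claim that every element of $\wt{\cL}$ has the form $z^{-1}l$ with $l\in\cL$ is not literally true, since $\cL$ need not be a $\Z$-submodule; after clearing denominators one only gets $l$ in the $\Z$-span of $\cL$, but the bracket computation goes through unchanged by bilinearity. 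Also, for the codimension-one claim it is cleanest to note that $[M_d,M_d]$ equals the trace-zero matrices in every characteristic (the matrix units $E_{ij}$, $i\ne j$, and the differences $E_{ii}-E_{jj}$ are commutators, and the trace map is onto because $\tr E_{11}=1$), so no characteristic restriction is hidden there.
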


\begin{proof}

Let $\ov{\Z}$ be the algebraic closure of $\wt{\Z}$. We now form the scalar extension $\ov{\A} = \wt{\A}\otimes_{\wt{\Z}}\ov{\Z}$ which is, as a finite dimensional central simple algebra over an algebraically closed field
$\ov{\Z}$, isomorphic to $M_d(\ov{\Z})$ where $d= \sqrt{\dim_{\wt{\Z}}\wt{\A}}$.  Thus  $d\ne 2$ 
if $\dim_{\wt{\Z}}\wt{\A}\ne 4$.

Observe  that $\wt{\cL}$ is a Lie ideal of $\wt{\A}$, and hence $\ov{\cL} = \wt{\cL}\otimes \ov{\Z}$ is a Lie ideal of $\ov{\A}$.  Lemma \ref{Lfolk} tells us that $\ov{\cL}$ is either $0$, $\ov{\Z}$,   $[\ov{\A},\ov{\A}]$ or $\ov{\A}$. Note that
 \begin{align} \label{elis}
 \begin{split}
  0 &= 0\otimes \ov{\Z}, \,\,\,\, \ov{\Z} =  \wt{\Z}\otimes \ov{\Z}, \,\,\,\, 
   [\ov{\A},\ov{\A}] = [\wt{\A},\wt{\A}] \otimes \ov{\Z}, \,\,\,\, \ov{\A} = \wt{\A}\otimes \ov{\Z}.
 \end{split}
 \end{align}
We now make a small digression and record the following easily proven fact: if $\W$ and $\V$ are $\wt{\Z}$-subspaces of $\wt{\A}$ and $\W\otimes \ov{\Z} = \V\otimes \ov{\Z}$, then $\W = \V$. Accordingly, since
$\ov{\cL} = \wt{\cL}\otimes \ov{\Z}$ is equal to one of the sets listed in \eqref{elis}, it follows that $\wt{\cL}$ is either $0$, $\wt{\Z}$,  $[\wt{\A},\wt{\A}]$ or $\wt{\A}$.
\end{proof}

\begin{remark}\label{Rcs}
If $\A$ itself is a finite dimensional central simple $\FF$-algebra, then this result gets a simpler form. Namely, in this case $\wt{\Z}=\Z=\FF$,  $\wt{\A} = \A$,  and  moreover $\wt{\V} = \V$  for every linear subspace $\V$ of $\A$. \end{remark} 

For more details about Lie ideals in simple algebras we refer the reader to \cite{H2}. A more recent reference is  the paper \cite{BKS} in which  Lie ideals are thoroughly studied in both algebraic and analytic setting.  

\subsection{General remarks on Lie skew-ideals} Let $\A$ be a  $\ast$-algebra over a field $\FF$ with $\Char(\FF)\ne 2$.  Every Lie ideal of $\A$ is also a Lie skew-ideal of $A$, while the converse is not true
in general. For example, $\cS$ and $\K$ are Lie skew-ideals, which are only rarely Lie ideals.
Obviously,  Lie skew-ideals are closed under sums and intersections. Further, if $\cL_1$ and
$\cL_2$ are  Lie skew-ideals, then $[\cL_1,\cL_2]$ is also a Lie skew-ideal. This can be easily checked by using the Jacobi identity.

 Let us mention eight examples of Lie
skew-ideals: $0$, $\Z$, $\K$, $[\cS,\K]$, $\cS$, $\Z$ + $\K$,  $[\A,\A]$, and $\A$.
As indicated above, there are other natural examples. The reasons for pointing out these eight examples will become clear in the sequel.

For subspaces of $\K$  the notion of a Lie skew-ideal
coincides with the standard and extensively 
studied notion of a Lie ideal of $\K$. For a simple algebra $\A$ with $\dim_{\Z} \A > 16$, a classical theorem by Herstein states that every Lie ideal of $\K$ either contains $[\K,\K]$ or is contained in $\Z$ \cite[Theorem 2.12]{H2}.  The following example justifies the dimension restriction.

\begin{example}\label{ex2}
If $\A=M_4(\FF)$, $\Char (\FF)\ne 2$, endowed with the transpose involution, then $\K$ can can be written as a Lie theoretic direct sum of two simple  Lie algebras, $\K=\K_1\oplus \K_2$. Each $\K_i$ is  3-dimensional;  a basis of $\K_1$ is $\{E_{12} - E_{21} + E_{34} - E_{43}, E_{13} - E_{31} + E_{42} - E_{24},  E_{14} - E_{41} + E_{23} - E_{32}\}$,  and  a basis of $\K_2$ is $\{E_{12} - E_{21} - E_{34} + E_{43}, E_{13} - E_{31} - E_{42} + E_{24},  E_{14} - E_{41} - E_{23} + E_{32}\}$. Thus, $\K_1$ and $\K_2$ are Lie ideals of $\K$ (and hence Lie skew-ideals of $\A$) which are neither contained in $\Z$ nor do they contain $[\K,\K]$ (which is equal to $\K$ in this example).
\end{example}

 Somewhat less known is Herstein's result which treats linear subspaces 
$\cL$ of $\cS$ satisfying $[\cL,[\K,\K]]\subseteq \cL$ \cite[Theorem 2.1]{H1}. Again assuming the simplicity of $\A$ and some additional mild technical conditions, this result says that $\cL$ either contains $[\cS,\K]$ or is contained in $\Z$. Of course this result also  covers Lie skew-ideals of $\A$ that are contained in $\cS$.  

Now let $\cL$ be a general Lie skew-ideal. If $\cL^* =\cL$ then $\cL = \cL\cap\cS \oplus \cL\cap\K$, and for $\cL\cap\cS$ and $\cL\cap \K$ we can use Herstein's aforementioned  results.  However, not every Lie skew-ideal is invariant under $\ast$.

\begin{example} \label{ex1}
Let $\cL$ be the one-dimensional subspace of $M_2(\FF)$
generated by $L=E_{11} + E_{12} - E_{21} + E_{22}$. Note that $\cL$ is a Lie skew-ideal of $M_2(\FF)$
with respect to the  transpose involution, but is not invariant under this involution.
\end{example}

In what follows we shall see that this example is a rather exceptional one. Nevertheless, it seems that Herstein's theorems are not directly applicable to our purposes. Not only because of the $\ast$-invariance problem, but also since we wish to obtain a precise \emph{description} of all Lie skew-ideals rather than just   information about certain inclusions. This seems to be out of reach in such a general class as is the class of simple algebras. But we shall confine ourselves to a more special class of prime PI  algebras - these algebras being close to finite dimensional simple algebras. Still, Herstein's theory has been useful for us philosophically. It indicates that Lie skew-ideals  are treatable.

Let us briefly discuss another question that naturally appears in connection with Lie skew-ideals, and which we find interesting in its own right. Assume that $\A$ has an identity element $1$ (in general we do not assume this in advance) and let $\U$ be the set of all unitary elements in $\A$, $\U = \{u\in\A\,|\,u^* = u^{-1}\}$. This question concerns
  the relation between
Lie skew-ideals  and  subspaces  of $\A$ that are closed under conjugation with unitary elements, i.e., subspaces $\cL$ of $\A$ such that $u\cL u^*\subseteq \cL$ for every $u\in\U$.
 This is an analogue to the problem of the relation between
Lie ideals  and  subspaces closed under conjugation with invertible elements (in other words, subspaces invariant under all inner automorphisms). One of the basic results in the latter area says that a closed linear subspace of a Banach algebra must be a Lie ideal if it closed under conjugation with invertible elements. The same proof shows the following.

\begin{proposition}\label{Lbanach}
Let $\A$ be a real or complex Banach algebra with $\mathbb R$-linear involution $*$. If  a closed linear subspace $\cL$ of $\A$ is closed under conjugation with unitaries, then $\cL$ is a  Lie skew-ideal of $\A$.
\end{proposition}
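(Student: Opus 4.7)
The plan is to imitate the classical argument showing that a norm-closed subspace of a Banach algebra invariant under conjugation by all invertibles is a Lie ideal, but with the exponential of arbitrary elements replaced by the exponential of skew-symmetric elements (which is the standard source of one-parameter unitary groups).

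First, I would fix an arbitrary element $k\in\K$ and form the one-parameter family
\[
u(t):=\exp(tk)=\sum_{n=0}^\infty\frac{t^n k^n}{n!},\qquad t\in\mathbb R,
\]
which converges in $\A$ since $\A$ is a Banach algebra with identity. Using that $*$ is an $\mathbb R$-linear continuous anti-homomorphism and that $k^*=-k$, a termwise computation gives $u(t)^*=\exp(tk^*)=\exp(-tk)=u(t)^{-1}$, so each $u(t)$ is unitary.

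Next, by the assumption on $\cL$, for every $a\in\cL$ and every $t\in\mathbb R$ we have
\[
u(t)\,a\,u(t)^*=u(t)\,a\,u(-t)\in\cL.
\]
Since $\cL$ is a linear subspace containing $a$, for $t\neq 0$ the difference quotient
\[
\frac{u(t)\,a\,u(-t)-a}{t}
\]
also lies in $\cL$. A routine estimate using the power series for $u(\pm t)$ shows that this quotient converges in norm to $ka-ak=[k,a]$ as $t\to 0$. Because $\cL$ is norm-closed, the limit $[k,a]$ belongs to $\cL$. As $k\in\K$ and $a\in\cL$ were arbitrary, $[\K,\cL]\subseteq\cL$, which is precisely the Lie skew-ideal condition.

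The only point that requires any care is the verification that $u(t)^*=\exp(-tk)$, which relies on three compatible features of the hypotheses: continuity of $*$ (to interchange it with the series), the fact that $*$ is an $\mathbb R$-linear anti-homomorphism (to get $(k^n)^*=(k^*)^n$ and $(tk)^*=tk^*$ for real $t$), and $k^*=-k$. Once this is in hand, the rest is a differentiation at $t=0$ combined with the closedness of $\cL$. I do not anticipate a genuine obstacle; this is essentially the well-known Banach-algebraic trick adapted from invertibles to unitaries and from Lie ideals to Lie skew-ideals.
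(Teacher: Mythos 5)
Your argument is correct and is essentially the paper's own proof: both exponentiate a skew-symmetric element to obtain the one-parameter unitary group $e^{tk}$, form the difference quotient $\frac{1}{t}\bigl(e^{tk}xe^{-tk}-x\bigr)$ inside $\cL$, and pass to the limit $t\to 0$ using norm-closedness of $\cL$. The only cosmetic difference is that the paper writes the quotient out explicitly via the Baker--Campbell--Hausdorff expansion $[k,x]+\frac{t}{2!}[k,[k,x]]+\cdots$, which is the same computation you describe as a routine power-series estimate.
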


\begin{proof}
If $k\in \K$, then $e^{tk}\in \U$ for every $t\in \mathbb R$. Therefore, for every $t\ne 0$ and $x\in\cL$, $\cL$ contains the element
$$
 \frac{1}{t}\Bigl(e^{tk}x(e^{tx})^* - x\Bigr) = \frac{1}{t}\Bigl(e^{tk}xe^{-tx} - x\Bigr) = [k,x] + \frac{t}{2!}[k,[k,x]] + \frac{t^2}{3!}[k,[k,[k,x]]] + \ldots
$$
(The second equality is a special case of the Baker-Campbell-Hausdorff
formula.)
Since $\cL$ is closed it follows that $$
[k,x]  = \lim_{t\rightarrow 0}  \frac{1}{t}\Bigl(e^{tk}x(e^{tx})^* - x\Bigr)\in\cL.
$$
\end{proof}

Algebraic versions of this propositions cannot be obtained so easily. Namely, in a purely algebraic setting the set 
of unitary elements
 can be very small, and so  
$u\cL u^*\subseteq \cL$ for all $u\in \U$ may trivially hold.

\begin{example}\label{exfreealg}
\mbox{}
\begin{enumerate}[\rm (1)]
\item
If $\A=\FF\langle \bar{X},\bar X^*\rangle$ is a free $*$-algebra, then $\U\subseteq \FF$, and so every subspace of $\A$ is 
closed under conjugation with unitaries. But of course not every subspace is a Lie skew-ideal.
\item
For a finite dimensional example of characteristic $3$,
let $\FF_3=\{0,1,2\}$ denote the field on $3$ elements and consider
$M_2(\FF_3)$ endowed with the transpose involution. Then
$\K$ is spanned by
$k=\begin{bmatrix} 0 & 2\\
1& 0\end{bmatrix}$
and
$$
\U=\left\{
%%%%%%%
\iffalse
\begin{bmatrix}
 0 & 1 \\
  1 & 0
  \end{bmatrix}
  ,
  \begin{bmatrix}
   0 & 1 \\
    2 & 0
	\end{bmatrix}
	,
	\begin{bmatrix}
	 0 & 2 \\
 1 & 0
\end{bmatrix}
,
\begin{bmatrix}
 0 & 2 \\
 2 & 0
\end{bmatrix}
,
\begin{bmatrix}
 1 & 0 \\
 0 & 1
\end{bmatrix}
,
\begin{bmatrix}
 1 & 0 \\
 0 & 2
\end{bmatrix}
,
\begin{bmatrix}
 2 & 0 \\
 0 & 1
\end{bmatrix}
,
\begin{bmatrix}
 2 & 0 \\
  0 & 2
  \end{bmatrix}
\fi
%%%%%%%%%%
\begin{bmatrix}
 \lambda & 0 \\
  0 & \mu
  \end{bmatrix},
\begin{bmatrix}
 0 & \lambda \\
  \mu & 0
  \end{bmatrix}
\mid
\lambda,\mu\in\FF_3\setminus\{0\}
\right\}.
  $$
  Now it is easy to construct
  examples of subspaces closed under unitary conjugation
  that are not Lie skew-ideals. For instance, take
  $\FF_3 \begin{bmatrix} 0&1\\
  1&0 \end{bmatrix}$.
\item
We conclude by presenting an example of a slightly different flavor.
Cohn \cite[Exercise 2.1.10]{Coh} has
constructed a division algebra (necessarily of characteristic
$2$) with only one unitary element.
Like in (1) this gives rise to an abundance of examples of subspaces
closed under unitary conjugation that are not Lie skew-ideals.
\end{enumerate}
\end{example}

There is, however, the following nice result by Lanski (which we state using our terminology): If $\A$ is an algebraic $*$-algebra over an infinite field $\FF$ with $\Char(\FF)\ne 2$, then every linear subspace of $\A$ which is closed under conjugation with unitary elements is  a  Lie skew-ideal of $\A$ \cite[Theorem 1]{Lan}. The converse is not true. Indeed, one can check that Lie skew-ideals from Examples \ref{ex2} and \ref{ex1} are not closed under conjugation with unitaries. On the other hand, Lie skew-ideals that will be important for us, namely $0$, $\Z$, $\K$, $[\cS,\K]$, $\cS$, $\Z$ + $\K$,  $[\A,\A]$ and $\A$,   are all closed  under conjugation with unitaries.

Let us finally mention that for every $f  \in \FF\langle \bar{X}\rangle$ and every algebra $\A$,  $\Span f(\A)$ is closed under conjugation with invertible elements; moreover, it is invariant under every algebra endomorphism of $\A$. Similarly,  for every $f  \in \FF\langle \bar{X}, \bar{X}^* \rangle$,  $\Span f(\A)$ is closed under conjugation with unitary elements, and moreover, it is invariant under every algebra $\ast$-endomorphism of $\A$. In view of these observations we have been in fact hesitating at the early stage of this work whether the definition of a Lie skew-ideal should also involve the  conjugation with unitaries.  However, it has turned out that this would lead to certain technical difficulties, and so we have decided to focus on commutation with skew-symmetric elements only.

\subsection{Lie skew-ideals in matrix algebras}
The purpose of this section is to describe Lie skew-ideals in matrix algebras with respect to two basic involutions, the transpose and the usual symplectic involution. Let us at the beginning present these notions in a more general framework.

\begin{definition}
Let $\A$ be a central simple $\ast$-algebra of degree $d$, i.e., of dimension $d^2$
over its center $\Z$. Then $*$ is called \textit{orthogonal} if 
\[\dim_\Z\cS=\frac{d(d+1)}{2}\]
and \textit{symplectic} if
\[\dim_\Z\cS=\frac{d(d-1)}{2}.\]
\end{definition}

Symplectic involutions only exist for even $d$. 
For a full account on (central simple) algebras with involutions we refer
the reader to \cite{boi}.

The basic example of an orthogonal involution on the algebra $\A=M_d(\FF)$ is  the {\em transpose involution}, $A\mapsto A^t$. The {\em usual symplectic involution} on $\A=M_d(\FF)$ is defined  when $d$ is even, $d = 2d_0$, as follows:
$$
\begin{bmatrix} A&B \\C &D \end{bmatrix}^* = \begin{bmatrix} D^t&-B^t \\-C^t &A^t \end{bmatrix}\quad\mbox{where $A,B,C,D\in M_{d_0}(\mathbb F)$.}
$$ 

\begin{definition}
An involution  on an algebra $\A$ is said to be
\textit{of the first kind} if it fixes $\Z$ pointwise and 
\textit{of the second kind} otherwise. Involutions of the second kind are also called \textit{unitary involutions}.
\end{definition}

Both the transpose and the usual symplectic involution are of course involutions of the first kind. 

\begin{lemma}\label{lmatriket}
Let $\A=M_d(\FF)$ be endowed with the transpose involution, and  
let $\Char(\FF) \ne 2,3$. If  $d\ne 2,4$, then $0$, $\Z$, $\K$, $[\cS,\K]$, $\cS$, $\Z$ + $\K$,  $[\A,\A]$, and $\A$ are the only Lie skew-ideals of $\A$.
\end{lemma}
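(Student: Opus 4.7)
The plan is to show that any Lie skew-ideal $\cL$ of $\A=M_d(\FF)$ is $*$-invariant when $d\ne 2,4$, and then to recover the eight listed subspaces as the $4\times 2$ combinations of possible symmetric and skew parts. To set things up, I would attach to $\cL$ four natural subspaces: the intersections $\cL_s:=\cL\cap\cS$ and $\cL_k:=\cL\cap\K$, and the projections $\pi_s(\cL):=\{\tfrac{1}{2}(a+a^*):a\in\cL\}$ and $\pi_k(\cL):=\{\tfrac{1}{2}(a-a^*):a\in\cL\}$. The identity $[a,k]^*=[a^*,k]$ (which follows from $k^*=-k$) yields $\pi_s([a,k])=[\pi_s(a),k]$ and its analogue for $\pi_k$, and thereby shows that all four of these subspaces are themselves Lie skew-ideals.

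I would next apply the two classical ``Herstein-style'' results. Since $\cL_k$ and $\pi_k(\cL)$ are Lie ideals of $\K=\mathfrak{so}_d$, and $\mathfrak{so}_d$ is simple when $d\ne 2,4$ (under $\Char(\FF)\ne 2,3$), each must belong to $\{0,\K\}$. And because $\cL_s,\pi_s(\cL)\subseteq\cS$ are closed under $[\cdot,[\K,\K]]$, Herstein \cite[Theorem 2.1]{H1} says each is either contained in $\Z$ or contains $[\cS,\K]=\cS_0$; since $\dim\Z=\dim(\cS/\cS_0)=1$, this pins them down to $\{0,\Z,\cS_0,\cS\}$.

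The heart of the argument is then a dimension count that forces $*$-invariance. Since $\pi_k|_{\cL}\colon\cL\to\pi_k(\cL)$ has kernel $\cL_s$, one has $\dim\cL=\dim\cL_s+\dim\pi_k(\cL)$, and the symmetric identity $\dim\cL=\dim\cL_k+\dim\pi_s(\cL)$ gives
\[
\delta:=\dim\pi_s(\cL)-\dim\cL_s=\dim\pi_k(\cL)-\dim\cL_k\in\{0,\ d(d-1)/2\}.
\]
Enumerating the chains $\cL_s\subseteq\pi_s(\cL)$ in $\{0,\Z,\cS_0,\cS\}$ (keeping in mind that $\Z\subseteq\cS_0$ happens exactly when $\Char(\FF)\mid d$), the possible values of the LHS are $\{0,1,s_0-1,s_0,s\}$, where $s=d(d+1)/2$ and $s_0=s-1$. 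Equating any nonzero value with $d(d-1)/2$ forces $d\in\{1,2\}$, which contradicts $d\ne 2,4$. Hence $\delta=0$, so $\cL_s=\pi_s(\cL)$ and $\cL_k=\pi_k(\cL)$, and therefore $\cL=\cL_s\oplus\cL_k$ is $*$-invariant.

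From here the $4\times 2$ pairs $(\cL_s,\cL_k)$ recover exactly the listed Lie skew-ideals: $0$, $\Z$, $[\cS,\K]=\cS_0$, $\cS$, $\K$, $\Z+\K$, $[\A,\A]=\cS_0+\K$, and $\A$. I expect the main obstacle to be the case analysis in the $\delta=0$ step; happily, it also makes the need for the two exclusions transparent: $d=2$ permits the graph-type non-$*$-invariant Lie skew-ideals (Example \ref{ex1}), while $d=4$ destroys the simplicity of $\mathfrak{so}_d$ (Example \ref{ex2}).
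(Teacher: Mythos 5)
Your argument reaches the right conclusion but by a genuinely different route from the paper's. The paper's proof is self-contained and computational: it also starts from the simplicity of $\K$ (so $\cL\cap\K\in\{0,\K\}$), but instead of splitting $\cL$ into symmetric and skew parts it takes an arbitrary $A\in\cL\setminus(\Z+\K)$ and manufactures the matrices $E_{uv}+E_{vu}$ inside $\cL$ by hand, using the identity $K^2AK-KAK^2=\frac{1}{3}\bigl([[[A,K],K],K]-[A,K^3]\bigr)$ (this is where $\Char(\FF)\ne 3$ enters) together with explicit matrix-unit manipulations; in this way it proves $[\cS,\K]\subseteq\cL$ directly rather than quoting Herstein, and then finishes by the same dichotomies you use. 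Your two genuinely new ingredients are (a) the observation that the projections $\pi_s(\cL)$ and $\pi_k(\cL)$ are again Lie skew-ideals, and (b) the rank--nullity identity $\dim\pi_s(\cL)-\dim(\cL\cap\cS)=\dim\pi_k(\cL)-\dim(\cL\cap\K)$, which forces $*$-invariance by comparing the possible codimension gaps on the two sides. This is an elegant way to dispose of the $*$-invariance issue (which the paper handles implicitly, by never decomposing $\cL$), and it makes transparent why $d=2$ must be excluded; the arithmetic in your case analysis is correct.

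The one step you should not treat as free is the appeal to Herstein's Theorem 2.1 of \cite{H1} for the subspaces of $\cS$. The paper explicitly notes that this theorem carries ``additional mild technical conditions'' beyond simplicity and $\Char(\FF)\ne 2$, and for that reason deliberately avoids invoking it. Before your proof is complete you must verify that those hypotheses hold for $M_d(\FF)$ with the transpose involution for every $d\ne 2,4$ --- in particular for $d=3$, where $\dim_{\Z}\A=9$ is small, and for characteristics dividing $d$, where $\Z\subseteq[\cS,\K]$ and $\cS$ is no longer the direct sum of $\Z$ and $[\cS,\K]$ as a $\K$-module. Everything else --- the identification of $[\cS,\K]$ with the symmetric trace-zero matrices, the codimension-one arguments pinning down $\{0,\Z,[\cS,\K],\cS\}$, and the final enumeration of the eight $*$-invariant combinations --- checks out, and your closing remarks correctly match the two excluded dimensions with Examples \ref{ex1} and \ref{ex2}.
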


\begin{proof} Let us begin by noting that $\Z$ consists of all scalar matrices, $[\cS,\K]$ consists of all symmetric matrices with trace $0$, and $[\A,\A]$ consists of all matrices with trace $0$.

Since $d\ne 2,4$, $\K$ is a simple Lie algebra. This is  well-known and easy to see (see for example \cite[p.~443]{BMMb}). Given a Lie skew-ideal  $\cL$ of $\A$, we have that
 $\cL\cap\K$ is a Lie ideal of $\K$, and hence either   $\cL\cap\K = 0$ or $\cL\cap\K =\K$. That is,
\begin{equation}\label{eLK}
\cL\cap\K = 0 \quad\mbox{or}\quad \K\subseteq\cL.
\end{equation}

Let us  first consider the case where $\cL\subseteq \Z+\K$. If $\cL\subseteq \Z$, then of course either 
$\cL =0$ or $\cL =\Z$. 
If $\cL\not\subseteq \Z$, then $\cL$ contains a matrix $\lambda I + K_0$ where $\lambda\in \FF$ and $0\ne K_0\in \K$. Picking $K_1\in \K$ which does not commute with $K_0$ it follows that $0\ne [K_0,K_1] = [\lambda I + K_0,K_1]\in \cL\cap\K$. Therefore $\K\subseteq\cL$ by \eqref{eLK}. But then either
$\cL = \K$ or $\cL=\Z+\K$.

Assume from now on that  $\cL\not\subseteq \Z+\K$. Therefore there exists $A=(a_{ij})\in\cL$ such that for some 
$i\ne j$, either $\alpha = a_{jj}-a_{ii}\ne 0$ or $\beta = a_{ij}+a_{ji}\ne 0$. Since for every $K\in \K$ also 
$K^3\in\K$, we have
$$K^2AK - KAK^2 = \frac{1}{3}\Bigl( [[[A,K],K],K] - [A,K^3]\Bigr)\in \cL.$$
For  $K= E_{ij}-E_{ji}$ we get 
\begin{equation}\label{eaij}
\alpha(E_{ij} + E_{ji}) + \beta (E_{ii} - E_{jj})\in\cL.
\end{equation}
Pick $k$ different from $i$ and $j$ (recall that $d\ne 2$!). Since $E_{jk}-E_{kj}\in\cL$, it follows that $\cL$ contains
$$
[[\alpha(E_{ij} + E_{ji}) + \beta (E_{ii} - E_{jj}), E_{jk}-E_{kj}],E_{jk}-E_{kj}] = -\alpha(E_{ij} + E_{ji}) + 2\beta(E_{jj}-E_{kk}).
$$
Using this together with \eqref{eaij} it follows that $\beta(E_{ii} + E_{jj} - 2E_{kk})\in \cL$, and hence also
$$
\beta(E_{ik} + E_{ki}) = \frac{1}{3}[ \beta(E_{ii} + E_{jj} - 2E_{kk}), E_{ik} - E_{ki}]\in\cL.
$$
If $\beta\ne 0$, then this yields. $E_{ik} + E_{ki}\in \cL$. If, however, $\beta =0$, then $\alpha\ne 0$ and hence $E_{ij} + E_{ji}\in \cL$ by \eqref{eaij}. Thus, in any case $\cL$ contains a matrix of the form $E_{uv} + E_{vu}$ with $u\ne v$. We claim that this implies that $\cL$ contains all matrices of the form $E_{pq} + E_{qp}$ with $p\ne q$. Indeed, if $\{p,q\}\cap \{u,v\}=\emptyset$, then this follows from  $E_{pq} + E_{qp} = [[E_{uv} + E_{vu}, E_{vp} - E_{pv}], E_{uq} - E_{qu}]$, and if $\{p,q\}\cap \{u,v\}\ne\emptyset$, then the proof is even easier. Consequently,
$E_{qq} - E_{pp} = \frac{1}{2}[E_{pq} + E_{qp}, E_{pq} - E_{qp}]\in\cL$. Note that all these relations can be summarized as
\begin{equation} \label{eS0}
[\cS,\K]\subseteq \cL.
\end{equation}

Suppose that $\cL\cap\K = 0$. We claim that in this case $\cL\subseteq\cS$. Indeed, if this was not true, then $\cL$ would contain a matrix $K_0+ S_0$ with $0\ne K_0\in\K$ and $S_0\in \cS$. Picking $K_1\in \K$ that does not commute with $K_0$ it then follows from \eqref{eS0} that $0\ne [K_0,K_1] = [K_0+S_0,K_1] - [S_0,K_1] \in \cL\cap \K$, a contradiction. Thus $[\cS,\K]\subseteq \cL\subseteq\cS$ and so either $\cL=[\cS,\K]$ or $\cL=\cS$. 

It remains to consider the case where $\cL\cap\K \ne 0$. In this case $ \K\subseteq \cL$ by \eqref{eLK}. Since
$\cL$ also contains $[\cS,\K]$ and since $[\cS,\K] + \K = [\A,\A]$, it follows that $[\A,\A]\subseteq \cL\subseteq\A$. But then  either $\cL=[\A,\A]$ or $\cL=\A$. 
\end{proof}

The cases where $d=2$ or $d=4$ are indeed exceptional; see  Examples \ref{ex1} and \ref{ex2}.

Our next aim is to prove a version of Lemma \ref{lmatriket} for the usual symplectic involution. For this we need the following lemma which describes the structure of certain subspaces of $M_d(\FF)$ that are in particular Lie skew-ideals of $M_d(\FF)$ with respect to the transpose involution. Since the restrictions $d\ne 2,4$ and   
$\Char(\FF) \ne 3$ are unnecessary in this situation,  we cannot apply  Lemma \ref{lmatriket}. In any case a direct computational proof could be easily given. However, a result by Montgomery \cite[Corollary 1]{Mont} describing additive subgroups $\M$ of simple rings $\A$ with involution satisfying $a\M a^*\subseteq \M$ for all $a\in \A$ will make it  possible for us to use a shortcut. This result implies that if $\A$ is a simple algebra over a field $\FF$ with 
$\Char(\FF) \ne 2$, the involution $\ast$ is of the first kind, and $\M$ is such a linear subspace of $\A$, then 
$\M$ must be either  $0$,  $\K$, $\cS$, or $\A$.

\begin{lemma}\label{lmatrikeU}
Let $\A=M_d(\FF)$ be endowed  with the transpose involution, and  
let $\Char(\FF) \ne 2$.
If $\M$ is a linear subspace of $\A$ such that $MA^t + AM\in \M$ for all $M\in\M$ and $A\in\A$, then $\M$ is either
$0$,  $\K$, $\cS$, or $\A$.
\end{lemma}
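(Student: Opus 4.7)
The plan is to reduce the given hypothesis to the one in Montgomery's theorem cited just above the statement, and then quote it. Concretely, Montgomery's result needs the stronger closure condition $A\M A^t\subseteq\M$ for every $A\in\A$, whereas we are only given that $\M$ is closed under the ``sandwich--sum'' operation $T_A:M\mapsto AM+MA^t$ for every $A\in\A$. So the real content of the lemma is squeezing the multiplicative sandwich $AMA^t$ out of two applications of the additive operation $T_A$.

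The key computation I would do is to iterate $T_A$. For $M\in\M$ and $A\in\A$,
\[
T_A\bigl(T_A(M)\bigr)=A(AM+MA^t)+(AM+MA^t)A^t = A^2M + 2\,AMA^t + M(A^t)^2.
\]
Since $(A^t)^2=(A^2)^t$, the outer terms $A^2M+M(A^2)^t$ are exactly $T_{A^2}(M)$, which also lies in $\M$ by hypothesis. Subtracting gives $2\,AMA^t\in\M$, and as $\Char(\FF)\ne 2$ we conclude $AMA^t\in\M$ for all $A\in\A$ and all $M\in\M$.

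Now $\M$ is an additive (in fact $\FF$-linear) subspace of the simple $\FF$-algebra $\A=M_d(\FF)$, and the transpose is an involution of the first kind; the closure $A\M A^*\subseteq\M$ just established is precisely the hypothesis of Montgomery's corollary quoted above the lemma. That corollary then forces $\M\in\{0,\K,\cS,\A\}$, finishing the proof. The only step that required any thought is the identity above, and once one notices that $T_A$ squared contains the wanted term $2AMA^t$ alongside a piece that equals $T_{A^2}(M)$, there is no real obstacle; characteristic $\ne 2$ is used in an essential way exactly to extract $AMA^t$ from $2AMA^t$.
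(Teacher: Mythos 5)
Your proof is correct and is essentially identical to the paper's: the paper extracts $AMA^t\in\M$ from the same identity $2\,AMA^t=\bigl((MA^t+AM)A^t+A(MA^t+AM)\bigr)-\bigl(M(A^2)^t+A^2M\bigr)$ (your iterated $T_A$ minus $T_{A^2}$), divides by $2$ using $\Char(\FF)\ne 2$, and then invokes Montgomery's corollary exactly as you do.
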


\begin{proof}
From the identity   
$$
AMA^t = \frac{1}{2}\Bigl( \bigl((MA^t + AM)A^t + A (MA^t + AM)\bigr) - \bigl(M(A^2)^t + A^2M\bigr) \Bigr)
$$
it follows that $AMA^t \in \M$ for all $A\in \A$ and $M\in \M$. Therefore the result follows immediately from \cite[Corollary 1]{Mont}.
\end{proof}

\begin{lemma}\label{lmatrikes}
Let $\A=M_{2d_0}(\FF)$, let $\ast$ be the usual symplectic involution on $\A$, and  
let $\Char(\FF) \ne 2$. Then $0$, $\Z$, $\K$, $[\cS,\K]$, $\cS$, $\Z$ + $\K$,  $[\A,\A]$, and $\A$ are the only Lie skew-ideals of $\A$.
\end{lemma}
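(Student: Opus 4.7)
The proof parallels that of Lemma \ref{lmatriket}, with two pleasant simplifications: the symplectic Lie algebra $\mathfrak{sp}_{2d_0}(\FF)$ is simple for every $d_0 \geq 1$ whenever $\Char(\FF)\neq 2$ (no exception analogous to $d=4$ in the orthogonal setting), and the restriction $\Char(\FF)\neq 3$ can be dropped by invoking Lemma \ref{lmatrikeU}. First, I would identify the eight listed subspaces concretely: $\Z = \FF\cdot I$, while $[\cS,\K]$ and $[\A,\A]$ are the trace-zero parts of $\cS$ and $\A$ respectively (standard dimension counts together with $\tr([\A,\A])=0$). In the degenerate case $d_0=1$ one has $\cS=\Z$ and $\K=[\A,\A]$, so the list collapses to $\{0,\Z,[\A,\A],\A\}$ and the claim reduces to Lemma \ref{Lfolk}; henceforth assume $d_0\geq 2$.

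Next I would use that $\K=\mathfrak{sp}_{2d_0}(\FF)$ is a simple Lie algebra. Since $[\cL\cap\K,\K]\subseteq\cL\cap[\K,\K]\subseteq\cL\cap\K$, simplicity yields the dichotomy
\[
\cL\cap\K=0\quad\text{or}\quad\K\subseteq\cL.
\]
The case $\cL\subseteq\Z+\K$ is dispatched exactly as in Lemma \ref{lmatriket}: either $\cL\subseteq\Z$ (giving $\cL\in\{0,\Z\}$), or some $\lambda I+K_0\in\cL$ has $K_0\neq 0$, and bracketing with a $K_1\in\K$ not commuting with $K_0$ (possible by non-abelianness of $\mathfrak{sp}_{2d_0}$) produces a nonzero element of $\cL\cap\K$, forcing $\K\subseteq\cL$ and hence $\cL\in\{\K,\Z+\K\}$.

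For the remaining case $\cL\not\subseteq\Z+\K$ the goal is $[\cS,\K]\subseteq\cL$. Here I would exploit the fact that the statement and proof of Lemma \ref{lmatrikeU} transfer verbatim to the symplectic involution, since they rely only on the applicability of Montgomery's Corollary 1 to any involution of the first kind on a simple algebra: every linear subspace $\M\subseteq\A$ satisfying $MA^*+AM\in\M$ for all $A\in\A$, $M\in\M$, must be one of $\{0,\K,\cS,\A\}$. Pick $A\in\cL$ whose image in $\cS/\Z$ is nonzero. The Lie skew-ideal axiom already gives $KM+MK^{*}=[K,M]\in\cL$ for every $K\in\K$, so the remaining task is to construct from $\cL$ (using iterated brackets with skew elements and the resulting elements together with $\cL^*$) a subspace $\M$ satisfying the hypothesis of the symplectic version of Lemma \ref{lmatrikeU}. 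The resulting classification of $\M$, combined with $\cL\not\subseteq\Z+\K$, then pins down $[\cS,\K]\subseteq\cL$.

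Finally I would combine with the dichotomy. If $\cL\cap\K=0$, then $\cL\subseteq\cS$ (otherwise a mixed element $S_0+K_0\in\cL$ with $0\neq K_0\in\K$ would produce, after bracketing with a suitably chosen $K_1\in\K$, a nonzero element of $\cL\cap\K$), so $[\cS,\K]\subseteq\cL\subseteq\cS$ gives $\cL\in\{[\cS,\K],\cS\}$. If $\K\subseteq\cL$, then $[\A,\A]=[\cS,\K]+\K\subseteq\cL\subseteq\A$ gives $\cL\in\{[\A,\A],\A\}$. The main obstacle is the construction hinted at in the previous paragraph: the Lie skew-ideal axiom yields $MA^{*}+AM\in\cL$ only for $A\in\K$, so bridging to the full hypothesis of Lemma \ref{lmatrikeU} (all $A\in\A$, including the symmetric ones that encode anti-commutators) is the delicate step, and it is precisely here that the specific combinatorics of the symplectic involution must be used to sidestep the $\Char(\FF)\neq 3$ assumption needed in the orthogonal proof.
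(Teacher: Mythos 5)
Your outline reproduces the skeleton of the transpose-case argument (Lemma \ref{lmatriket}), but the step that carries all the content---establishing $[\cS,\K]\subseteq\cL$ once $\cL\not\subseteq\Z+\K$---is never actually performed. You propose to feed some subspace built from $\cL$ into a ``symplectic version'' of Lemma \ref{lmatrikeU}, and then you yourself observe that the Lie skew-ideal axiom only yields $KM+MK^{*}=[K,M]\in\cL$ for $K\in\K$, whereas the hypothesis of that lemma requires the condition for \emph{all} $A\in\A$. Nothing in the proposal bridges that distance, so the proof is incomplete precisely at its crux. (The peripheral parts---simplicity of $\K$, the dichotomy $\cL\cap\K=0$ or $\K\subseteq\cL$, the case $\cL\subseteq\Z+\K$, and the endgame once $[\cS,\K]\subseteq\cL$ is known---are fine, but they are the easy parts.)

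The paper avoids this obstacle by an entirely different, block-matrix route, and the comparison shows why your difficulty is real. Writing $\A=M_{2d_0}(\FF)$ in $2\times 2$ blocks over $\A_0=M_{d_0}(\FF)$, one first commutes with $\begin{bmatrix} I&0\\0&-I\end{bmatrix}\in\K$ to see that the diagonal part and each off-diagonal block of any element of $\cL$ lie separately in $\cL$. The decisive point is that the symplectic $\K$ contains $\begin{bmatrix} A&0\\0&-A^t\end{bmatrix}$ for \emph{every} $A\in\A_0$, not only for skew $A$; commuting $\begin{bmatrix} 0&M\\0&0\end{bmatrix}$ with these shows that the set $\M_0$ of upper-right blocks occurring in $\cL$ satisfies $MA^t+AM\in\M_0$ for all $A\in\A_0$---exactly the hypothesis of Lemma \ref{lmatrikeU} \emph{as stated}, for the transpose involution on the half-size algebra $\A_0$. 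Hence $\M_0\in\{0,\K_0,\cS_0,\A_0\}$, and a case-by-case computation with further commutators (against $\begin{bmatrix} 0&S\\0&0\end{bmatrix}$, $\begin{bmatrix} 0&0\\S&0\end{bmatrix}$, $\begin{bmatrix} 0&I\\0&0\end{bmatrix}$, $\begin{bmatrix} 0&0\\I&0\end{bmatrix}$) pins $\cL$ down to one of the eight subspaces; no simplicity of the symplectic Lie algebra is needed, and the characteristic-$3$ issue never arises. To salvage your approach you would have to produce, by explicit bracket manipulations, enough elements of $[\cS,\K]$ inside $\cL$ from a single element outside $\Z+\K$; the natural generators for doing so are precisely the block matrices above, which leads you back to the paper's argument.
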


\begin{proof}
Set $\A_0 = M_{d_0}(\FF)$ and let  $\K_0$ and $\cS_0$ denote the sets of symmetric and skew-symmetric matrices in $\A_0$ with respect to the transpose involution.  Note that
$\K$  consists of all matrices of the form 
$$
\begin{bmatrix} A&S \\T &-A^t \end{bmatrix} \quad\mbox{where $A\in \A_0$, $S, T\in\cS_0$,}
$$
and $\cS$  consists of all matrices of the form 
$$
\begin{bmatrix} A&K \\L &A^t \end{bmatrix} \quad\mbox{where $A\in \A_0$, $K, L\in\K_0$.}
$$

Let  $\cL$ be a Lie skew-ideal of $\A$, and let
 $\begin{bmatrix} A&B \\C &D \end{bmatrix}\in \cL$. Commuting this matrix with  $\begin{bmatrix} I&0 \\0 &-I \end{bmatrix}\in\K$ it follows that  $\begin{bmatrix} 0&-B \\C &0 \end{bmatrix}\in \cL$. Furthermore, commuting the latter matrix with $\begin{bmatrix} I&0 \\0 &-I \end{bmatrix}$ one easily shows that actually both
$\begin{bmatrix} 0&B \\0 &0 \end{bmatrix}$ and $\begin{bmatrix} 0&0 \\C &0 \end{bmatrix}$ belong to $\cL$.  Thus, we have
\begin{equation} \label{ABCD}
\begin{bmatrix} A&B \\C &D \end{bmatrix}\in \cL\Rightarrow \begin{bmatrix} A&0 \\0 &D \end{bmatrix},
\begin{bmatrix} 0&B \\0 &0 \end{bmatrix}, \begin{bmatrix} 0&0 \\C &0 \end{bmatrix} \in\cL.
\end{equation}

 Let $\M_0$ be the set of all $M\in \A_0$ such that  $\begin{bmatrix} 0&M \\0 &0 \end{bmatrix}\in\cL$. Commuting this matrix with $\begin{bmatrix} A&0 \\0 &-A^t \end{bmatrix}\in \K$ it follows that $\M_0$, considered as a subspace of $\A_0$, satisfies the condition of Lemma \ref{lmatrikeU}. Therefore $\M_0$ is $0$,  $\K_0$, $\cS_0$, or $\A_0$. Each of these four cases shall be considered separately.
 
 Assume that $\M_0 = 0$. From \eqref{ABCD} we see that then  any matrix in $\cL$ is of the form $\begin{bmatrix} A&0 \\C &D \end{bmatrix}$. Commuting such a matrix with $\begin{bmatrix} 0&S \\0 &0 \end{bmatrix}\in \K$ it follows that $AS= SD$ for all $S\in \cS_0$. It is easy to see that this is possible only if $A = D$ is a scalar matrix. Consequently, commuting $\begin{bmatrix} 0&0 \\C &0 \end{bmatrix}$ with $\begin{bmatrix} 0&I \\0 &0 \end{bmatrix}$ it follows that $C=-C$, i.e., $C=0$. Therefore $\cL$ consists only of scalar matrices. There are just two possibilities: either $\cL =0$ or $\cL = \Z$. 
 
 Next we consider the case where $\M_0 = \K_0$. Pick $K\in\K_0$ and $S\in\cS_0$. Commuting $\begin{bmatrix} 0&K \\0 &0 \end{bmatrix}\in \cL$ with $\begin{bmatrix} 0&0 \\S &0 \end{bmatrix}\in \K$ it follows that $\begin{bmatrix} KS&0 \\0&-SK  \end{bmatrix}\in\cL$. It is easy to see that every matrix in $\A_0$ of the form $KS$ has trace $0$, and conversely, every matrix in $\A_0$ with trace $0$ is a linear span of matrices of the form $KS$. Therefore $\cL$ contains all matrices $\begin{bmatrix} A&0 \\0&A^t  \end{bmatrix}$ with $A\in [\A_0,A_0]$. Now take any matrix in $\cL$ of the form $\begin{bmatrix} A&0 \\0 &D \end{bmatrix}$. Its commutator with  $\begin{bmatrix} 0&0 \\S &0 \end{bmatrix}\in \K$ is $\begin{bmatrix} 0&AS-SD \\0 &0 \end{bmatrix}$. Since this matrix must be in $\cL$ it follows that $AS-SD\in\K_0$ for every $S\in\cS_0$. This condition can be rewritten as $S(A^t - D) + (A^t - D)^tS =0$  for every $S\in\cS_0$. It is easy to see that this forces $A^t = D$.  Therefore the ``diagonal part" of $\cL$ consists only of matrices of the form $\begin{bmatrix} A&0 \\0 &A^t \end{bmatrix}$, and there are two possibilities: either all such matrices with an arbitrary $A\in \A_0$ are in $\cL$, or only all such matrices with the restriction that $A$ has trace $0$, i.e., $A\in [\A_0,\A_0]$. It remains to examine the ``lower corner" part. Pick $\begin{bmatrix} 0&0 \\C &0 \end{bmatrix}\in \cL$. Commuting it with $\begin{bmatrix} 0&I \\0 &0 \end{bmatrix}\in \K$ we get $\begin{bmatrix} -C&0 \\0 &C \end{bmatrix}\in \cL$. But then $C$ must lie in $\K_0$. Conversely, as the commutator of $\begin{bmatrix} A&0 \\0 &A^t \end{bmatrix}\in \cL$ with $\begin{bmatrix} 0&0 \\I &0 \end{bmatrix}\in \K$ is $\begin{bmatrix} 0&0 \\A^t-A &0 \end{bmatrix}$, and since every $K \in\K_0$ can be written as $K=A^t - A$ with $A\in [\A_0,\A_0]$, it follows that $\cL$ contains all matrices 
 $\begin{bmatrix} 0&0 \\K &0 \end{bmatrix}$ with $K\in\K_0$. 
 We can now gather all the information derived in the following
conclusion:   $\cL$ either consists of all matrices  $\begin{bmatrix} A&K \\L &A^t \end{bmatrix}$  with $A\in \A_0$, $S, T\in\K_0$ or of all such matrices with $A\in [\A_0,A_0]$, $S, T\in\K_0$. In the first case 
 $\cL= \cS$ and in the second case $\cL = [\cS,\K]$.
 
 The cases where $\M_0 = \cS_0$ or $\M_0 = \A_0$ can be treated  similarly as the $\M_0 = \K_0$ case. One can show that   $\M_0 = \cS_0$ implies that  $\cL= \K$ or $\cL = \Z + \K$, and $\M_0 = \A_0$ implies that  $\cL= [\A,\A]$ or $\cL =\A$. There are some differences compared to the case just treated, but the necessary modifications are quite obvious. Therefore we omit the details.
 \end{proof} 

\subsection{Lie skew-ideals in prime PI algebras}
The above results  make it possible for us to describe Lie skew-ideals in prime PI algebras with involution. The description depends on the kind of an involution.

As in the first subsection on Lie ideals, we denote  by $\wt{\A}$ the
central closure of  a prime PI algebra $\A$, and by  $\wt{\Z}$ the field of
fractions of $\Z$. By $\wt{\V}$ we denote the linear span of $\V\subseteq\A$
over $\wt{\Z}$.

\begin{theorem} \label{TPI}
Let  $\A$ be a prime PI algebra with involution of the first kind, and let $\cL$ be a  Lie skew-ideal of $\A$.
If  $\dim_{\wt{\Z}} {\wt{\A}} \ne 4, 16$ and  $\Char(\FF) \ne 2,3$, then $\wt{\cL}$ is either $0$, $\wt{\Z}$, $\wt{\K}$, $[\wt{\cS},\wt{\K}]$, $\wt{\cS}$, $\wt{\Z}$ + $\wt{\K}$,  $[\wt{\A},\wt{\A}]$ or $\wt{\A}$.
\end{theorem}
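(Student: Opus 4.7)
The plan is to mimic the proof of Proposition \ref{ppip}: reduce to a matrix algebra via central closure and scalar extension, apply Lemmas \ref{lmatriket} and \ref{lmatrikes}, and descend. Since $*$ is of the first kind it fixes $\Z$ pointwise, hence extends uniquely to an involution of the first kind on the central closure $\wt\A$ fixing $\wt\Z$ pointwise. The $\wt\Z$-span $\wt\cL$ of $\cL$ is then a Lie skew-ideal of the finite-dimensional central simple $\wt\Z$-algebra $\wt\A$ (every skew-symmetric element of $\wt\A$ has the form $z^{-1}k$ with $z\in\Z$ and $k\in\K$, and $\wt\cL$ is closed under Lie product with such elements); localization respects the $\cS\oplus\K$ decomposition, so $\wt\cS$ and $\wt\K$ are the symmetric and skew-symmetric parts of $\wt\A$.

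Next I would extend scalars to the algebraic closure $\ov\Z$ of $\wt\Z$, setting $\ov\A := \wt\A \otimes_{\wt\Z}\ov\Z$ and extending the involution $\ov\Z$-linearly (the identity on the second factor). Then $\ov\A \cong M_d(\ov\Z)$ with $d = \sqrt{\dim_{\wt\Z}\wt\A}$, and the hypothesis $\dim_{\wt\Z}\wt\A\ne 4,16$ translates to $d\ne 2,4$. The extended involution is still of the first kind on a matrix algebra over an algebraically closed field of characteristic $\ne 2$, so by the classical classification of such involutions it is conjugate to either the transpose involution or the usual symplectic involution (depending on whether $*$ is orthogonal or symplectic). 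The subspace $\ov\cL := \wt\cL\otimes\ov\Z$ is a Lie skew-ideal of $\ov\A$, so Lemma \ref{lmatriket} (transpose case, which needs $\Char(\FF)\ne 2,3$ and $d\ne 2,4$) or Lemma \ref{lmatrikes} (symplectic case) identifies $\ov\cL$ as one of the eight candidates $0$, $\ov\Z$, $\ov\K$, $[\ov\cS,\ov\K]$, $\ov\cS$, $\ov\Z+\ov\K$, $[\ov\A,\ov\A]$, $\ov\A$.

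To descend I would note that each candidate is of the form $\W\otimes_{\wt\Z}\ov\Z$ for the obvious $\wt\Z$-subspace $\W\subseteq\wt\A$: the identifications $\ov\cS = \wt\cS\otimes\ov\Z$ and $\ov\K = \wt\K\otimes\ov\Z$ pass through every bilinear construction involved (for instance $[\ov\cS,\ov\K] = [\wt\cS,\wt\K]\otimes\ov\Z$ and $\ov\Z+\ov\K = (\wt\Z+\wt\K)\otimes\ov\Z$). The descent principle already used in the proof of Proposition \ref{ppip} — namely that $\W\otimes\ov\Z = \V\otimes\ov\Z$ forces $\W=\V$ for $\wt\Z$-subspaces of $\wt\A$ — then pins down $\wt\cL$ as one of the eight subspaces listed. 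The main obstacle I foresee is checking the scalar-extension behavior of the involution: that $\wt\cS\otimes\ov\Z$ really coincides with the symmetric part of $\ov\A$, and that up to isomorphism the extended involution is transpose or symplectic. These rely on $\Char(\FF)\ne 2$ (ensuring $\A = \cS\oplus\K$, which is preserved under flat base change) together with the classical Brauer-type classification of first-kind involutions over algebraically closed fields; once those ingredients are in hand, the Lie skew-ideal content itself is entirely supplied by Lemmas \ref{lmatriket} and \ref{lmatrikes}.
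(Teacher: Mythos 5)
Your proposal is correct and follows essentially the same route as the paper's proof: extend the involution to the central closure and then to the scalar extension over the algebraic closure, invoke the classification of first-kind involutions on $M_d(\ov{\Z})$ as transpose or symplectic (the paper cites \cite[Corollary 4.6.13]{BMMb}), apply Lemmas \ref{lmatriket} and \ref{lmatrikes}, and descend via the subspace-comparison principle from Proposition \ref{ppip}. The only detail you supply that the paper leaves implicit is the verification that $\wt{\cL}$ is a Lie skew-ideal of $\wt{\A}$ via the form $z^{-1}k$ of skew elements of $\wt{\A}$, which is a welcome addition rather than a deviation.
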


\begin{proof}
As in the proof of Proposition \ref{ppip} we denote the algebraic closure of $\wt{\Z}$ by
$\ov{\Z}$, and  form the scalar extension $\ov{\A} = \wt{\A}\otimes_{\wt{\Z}}\ov{\Z}$ which is  isomorphic to $M_d(\ov{\Z})$ where $d= \sqrt{\dim_{\wt{\Z}}\wt{\A}}$. We can extend $\ast$ to  an involution (also of the first kind) of
$\wt{\A}$ according to $(z^{-1}a)^* = z^{-1}a^*$, and then further to an involution of $\ov{\A}$ (of the first kind)  by
$(z^{-1}a\otimes \lambda)^\ast = z^{-1}a^*\otimes \lambda$.  Note that
$\wt{\cS}$ is the set of symmetric elements of $\wt{\A}$, and
$\wt{\K}$ is the set of skew-symmetric  elements of $\wt{\A}$. Further,
 the set of symmetric elements $\ov{\cS}$ of $\ov{\A}$ is equal to
$\wt{\cS}\otimes \ov{\Z}$,
 and the set of skew-symmetric elements $\ov{\K}$ of $\ov{\A}$ is equal to
$\wt{\K}\otimes \ov{\Z}$.

Observe first that $\wt{\cL}$ is a Lie skew-ideal of $\wt{\A}$, and hence $\ov{\cL} = \wt{\cL}\otimes \ov{\Z}$ is a Lie skew-ideal of $\ov{\A}$. We now apply the description of an involution on $\ov{\A}$: there exists a set of matrix units $\{e_{ij}\}$ in  $\ov{\A}$ such that $\ast$ is either the transpose or the usual symplectic involution relative to $\{e_{ij}\}$ \cite[Corollary 4.6.13]{BMMb}. The condition that $\dim_{\wt{\Z}} {\wt{\A}} \ne 4, 16$ implies that $\ov{\A}$ is not isomorphic to $M_2(\ov{\Z})$ or $M_4(\ov{\Z})$.  We may now use Lemmas \ref{lmatriket} and \ref{lmatrikes}, and conclude that
 $\ov{\cL}$ is either $0$, $\ov{\Z}$, $\ov{\K}$, $[\ov{\cS},\ov{\K}]$, $\ov{\cS}$, $\ov{\Z}$ + $\ov{\K}$,  $[\ov{\A},\ov{\A}]$ or $\ov{\A}$. Note that
 \begin{align*} 
 \begin{split}
  0 &= 0\otimes \ov{\Z}, \,\,\,\, \ov{\Z} =  \wt{\Z}\otimes \ov{\Z}, \,\,\,\, \ov{\K} =  \wt{\K}\otimes \ov{\Z}, \,\,\,\,
  [\ov{\cS},\ov{\K}] =  [\wt{\cS},\wt{\K}] \otimes \ov{\Z}, \\
   \ov{\cS} = & \wt{\cS}\otimes \ov{\Z}, \,\,\,\, \ov{\Z} + \ov{\K} = (\wt{\Z} + \wt{\K} )\otimes \ov{\Z}, \,\,\,\,
   [\ov{\A},\ov{\A}] = [\wt{\A},\wt{\A}] \otimes \ov{\Z}, \,\,\,\, \ov{\A} = \wt{\A}\otimes \ov{\Z}.
 \end{split}
 \end{align*}
Hence it follows, just as in the proof of Proposition \ref{ppip}, that 
$\wt{\cL}$ is either $0$, $\wt{\Z}$, $\wt{\K}$, $[\wt{\cS},\wt{\K}]$, $\wt{\cS}$, $\wt{\Z}$ + $\wt{\K}$,  $[\wt{\A},\wt{\A}]$ or $\wt{\A}$.
\end{proof}

\begin{remark}\label{rKKK}
Assume the conditions of Theorem \ref{TPI}. Consider $\cL = [\K,\K]$. Clearly, $\cL$ is a Lie skew-ideal of $\A$. Since
 $\wt{\cL} = [\wt{\K},\wt{\K}]$ is contained in $\wt{\K}$, we see from Theorem \ref{TPI} that we have just two possibilities: either $[\wt{\K},\wt{\K}] =0$ or $[\wt{\K},\wt{\K}] = \wt{\K}$. As one can easily check by passing to 
 $\ov{\A}$, the first possibility is possible only when $\A$ is commutative (or when $\dim_{\wt{\Z}} {\wt{\A}} = 4$, but this case was excluded by the assumption of Theorem \ref{TPI}). Therefore  $[\wt{\K},\wt{\K}] = \wt{\K}$. This  (probably known) fact will be needed in the proof of Theorem \ref{thm:T2invo}.
\end{remark}

Let us now consider the simpler case when $\ast$ is of the second kind, i.e., $\ast$ is not  the identity on $\Z$.  With reference to the above notation we have the following result.

\begin{theorem} \label{TPIsecond}
Let  $\A$ be a prime PI algebra with involution of the second kind, and let $\cL$ be a  Lie skew-ideal of $\A$.
If   $\Char(\FF) \ne 2$, then $\wt{\cL}$ is either $0$, $\wt{\Z}$,  $[\wt{\A},\wt{\A}]$ or $\wt{\A}$.
\end{theorem}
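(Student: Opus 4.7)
The strategy will be to exploit the fact that when $*$ is of the second kind, the center $\Z$ contains a nonzero skew-symmetric element, which in the central closure becomes invertible and sets up a $\wt{\Z}$-module isomorphism between $\wt{\cS}$ and $\wt{\K}$. This will collapse the Lie skew-ideal condition to the ordinary Lie ideal condition, after which Proposition \ref{ppip} finishes the job.

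First I would note that since $*$ restricts to a nontrivial order-two automorphism of $\Z$, in characteristic $\ne 2$ the eigenspace decomposition $\Z = (\Z\cap\cS)\oplus(\Z\cap\K)$ forces $\Z\cap\K\ne 0$. Pick any $0\ne z\in\Z\cap\K$; in $\wt{\A}$ it is a nonzero element of the field $\wt{\Z}$, hence invertible. Using centrality, the computation $(zs)^*=s^*z^*=-zs$ for $s\in\cS$ gives $z\cS\subseteq\K$, and symmetrically $z\K\subseteq\cS$. Multiplying by $z^{-1}\in\wt{\Z}$ yields $\cS\subseteq\wt{\K}$ and $\K\subseteq\wt{\cS}$, so $\wt{\cS}\subseteq\wt{\K}$ and $\wt{\K}\subseteq\wt{\cS}$; thus $\wt{\cS}=\wt{\K}$. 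Since $\A=\cS+\K$ in characteristic $\ne 2$, one has $\wt{\A}=\wt{\cS}+\wt{\K}=\wt{\K}$.

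Next I would upgrade $\wt{\cL}$ to an honest Lie ideal. The hypothesis $[\cL,\K]\subseteq\cL$ extends by $\wt{\Z}$-bilinearity to $[\wt{\cL},\wt{\K}]\subseteq\wt{\cL}$, and by the identification above this is just $[\wt{\cL},\wt{\A}]\subseteq\wt{\cL}$. Therefore $\wt{\cL}$ is a Lie ideal of $\wt{\A}$. Since $\Char(\FF)\ne 2$, the characteristic clause in Proposition \ref{ppip} is satisfied regardless of $\dim_{\wt{\Z}}\wt{\A}$, and we conclude that $\wt{\cL}$ is one of $0$, $\wt{\Z}$, $[\wt{\A},\wt{\A}]$, or $\wt{\A}$.

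I do not anticipate a real obstacle here; the only subtle point is confirming that the central element $z$ genuinely exists (i.e., that second-kind really means $\Z\cap\K\ne 0$ in characteristic $\ne 2$), and that the $\wt{\Z}$-span conventions behave correctly under the extended involution. The crux is the identification $\wt{\K}=\wt{\A}$ via $z$, which is precisely what explains why Theorem \ref{TPIsecond} has four entries rather than the eight of Theorem \ref{TPI}: the four ``new'' entries of the first-kind list (namely $\wt{\K}$, $[\wt{\cS},\wt{\K}]$, $\wt{\cS}$, $\wt{\Z}+\wt{\K}$) collapse into the classical Lie-ideal entries in the second-kind setting.
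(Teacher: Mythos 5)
Your proof is correct and follows essentially the same route as the paper's: both extract a nonzero skew-symmetric element of the center, use its invertibility in $\wt{\Z}$ to show that commutation with $\K$ already controls commutation with all of $\wt{\A}$, conclude that $\wt{\cL}$ is a Lie ideal, and finish with Proposition \ref{ppip}. The paper phrases the key step elementwise via $[x,a]=w^{-1}[x,ws]+[x,k]$ rather than through your identification $\wt{\K}=\wt{\cS}=\wt{\A}$, but the underlying idea is identical.
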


\begin{proof}
The involution on $\A$ extends to $\wt{\A}$ in the obvious way, $(z^{-1}a)^* = {z^*}^{-1}a^*$.
Since $\ast$ is of the second kind, there exists $z\in \Z$ such that $w=z-z^*\ne 0$. Thus $w$ is nonzero skew-symmetric element in $\Z$. Pick $x\in \cL$ and $a\in \A$. We can write $a = s+k$ where $s\in\cS$ and $k\in \K$; indeed, we take $s=\frac{a+a^*}{2}$, $k=\frac{a-a^*}{2}$.  Clearly, $ws\in \K$ and so $[x,ws] \in \cL$, and of course also $[x,k]\in\cL$. But then $[x,a] = w^{-1}[x,ws] + [x,k]\in \wt{\cL}$. This proves that $[\cL,\A]\subseteq \wt{\cL}$, which readily implies that $[\wt{\cL},\wt{\A}]\subseteq \wt{\cL}$. That is, $\wt{\cL}$ is a Lie ideal of $\wt{\A}$. Now apply Proposition \ref{ppip}.
\end{proof}

\begin{corollary}\label{cor:csaLieSkew}
Let  $\A$ be a central simple algebra with involution $*$,
and let $\cL$ be a  Lie skew-ideal of $\A$.
\begin{enumerate}[\rm (1)]
\item
Suppose $*$ is  of the first kind.
If  $\dim_{{\Z}} {{\A}} \ne 4, 16$ and  $\Char(\FF) \ne 2,3$, then ${\cL}$ is either $0$, ${\Z}$, ${\K}$, $[{\cS},{\K}]$, ${\cS}$, ${\Z}$ + ${\K}$,  $[{\A},{\A}]$ or ${\A}$.
\item
Suppose $*$ is  of the second kind.
If   $\Char(\FF) \ne 2$, then $\cL$ is either $0$, ${\Z}$,  $[{\A},{\A}]$ or 
${\A}$.
\end{enumerate}
\end{corollary}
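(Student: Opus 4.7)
The plan is to derive this directly from Theorems \ref{TPI} and \ref{TPIsecond} by exploiting the fact that for a central simple algebra the passage to the central closure is trivial, so that all the ``tilde'' operations in the statements of those theorems reduce to the identity.

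First I would verify that a central simple $\FF$-algebra $\A$ in the sense of the earlier definition falls under the hypotheses of the main theorems: it is simple hence prime, and as a finite-dimensional algebra (of dimension $d^2$ over $\Z$) it automatically satisfies a nontrivial polynomial identity, so it is a prime PI algebra. Moreover, its center $\Z$ is already a field --- equal to the base field $\FF$ when $*$ is of the first kind, and (if we wish to view $\A$ as an algebra over the fixed subfield) a quadratic extension of that subfield when $*$ is of the second kind. In either case there is nothing to invert, so the central closure $\wt{\A}$ coincides with $\A$, the fraction field $\wt{\Z}$ coincides with $\Z$, and for every $\FF$-linear subspace $\V$ of $\A$ we have $\wt{\V}=\V$ (this is exactly the content of Remark \ref{Rcs}, which applies verbatim here since $\Z$-scalars are already inside $\A$).

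With these identifications in place, part (1) is immediate from Theorem \ref{TPI}: the hypothesis $\dim_{\wt{\Z}}\wt{\A}\ne 4,16$ of that theorem becomes the hypothesis $\dim_\Z\A\ne 4,16$ of the corollary, the characteristic assumption $\Char(\FF)\ne 2,3$ carries over unchanged, and each of the eight conclusions $\wt{\cL}\in\{0,\wt{\Z},\wt{\K},[\wt{\cS},\wt{\K}],\wt{\cS},\wt{\Z}+\wt{\K},[\wt{\A},\wt{\A}],\wt{\A}\}$ collapses to the corresponding statement without tildes. Part (2) follows in precisely the same way from Theorem \ref{TPIsecond}, with the sole remaining assumption $\Char(\FF)\ne 2$ and the four options $\{0,\Z,[\A,\A],\A\}$.

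Since the heavy lifting (scalar extension to an algebraically closed field, reduction to the transpose or usual symplectic involution on $M_d$, and the explicit case analysis of Lemmas \ref{lmatriket} and \ref{lmatrikes}) has already been carried out in proving Theorems \ref{TPI} and \ref{TPIsecond}, no genuine obstacle remains: the only thing to write down is the identification $\wt{\V}=\V$, after which the corollary is a transparent specialization.
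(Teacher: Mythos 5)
Your proposal is correct and is exactly the argument the paper intends: the corollary is stated without proof precisely because it is the specialization of Theorems \ref{TPI} and \ref{TPIsecond} via Remark \ref{Rcs} (a finite-dimensional central simple algebra is prime PI, its central closure is itself, and $\wt{\V}=\V$ for every subspace). Nothing further is needed.
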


\section{Classifying Polynomials and Tracial Nullstellens\"atze} \label{sec4}

The purpose of this section is to classify the polynomials in $\FF\langle \bar{X}\rangle$ and in $\FF\langle \bar{X}, \bar{X}^*\rangle$ with respect to their values on prime PI algebras (with involution), and then as corollaries of these classification results derive what we call ``tracial Nullstellens\"atze''. 

\subsection{Cyclic equivalence} The following notion was introduced in \cite{KS}.

\begin{definition} \label{dce}
We say that polynomials 
$f,g$ in $\FF\langle \bar{X}\rangle$ (resp.~in $\FF\langle \bar{X},\bar{X}^*\rangle$) are {\em cyclically equivalent} (notation $f \csim
 g$) if $f -g$ is a sum of commutators in $\FF\langle \bar{X}\rangle$ (resp.~in $\FF\langle \bar{X},\bar{X}^*\rangle$).
\end{definition}

The next remark shows that cyclic equivalence can be checked easily
and that it is ``stable'' under scalar extensions in
the following sense: Given a field extension $\FF\subseteq \mathbb K$
and $f,g\in \FF\ax$, then $f\csim g$ in $\FF\ax$ if and only if 
$f\csim g$ in
$\mathbb K\ax$.
We note it holds verbatim for $\FF\langle\bar X,\bar X^*\rangle$ but is 
stated here only for $\FF\ax$.

\begin{remark}
\mbox{}
\begin{enumerate}[\rm (a)]
\item Two words $v,w\in\ax$ are cyclically equivalent
if and only if there are words
$v_1,v_2\in\ax$ such that $v=v_1v_2$ and $w=v_2v_1$.
\item Two polynomials $f=\sum_{w\in\ax}a_ww$ and $g=\sum_{w\in\ax}b_ww$
($a_w,b_w\in\FF$) are cyclically equivalent if and only if for each 
$v\in\ax$,
$$\sum_{w\csim v}a_w=\sum_{w\csim v}b_w.$$
\end{enumerate}
\end{remark}

The next two lemmas are simple, but 
essential for this paper.

\begin{lemma}\label{L1}
Let $f = f(X_1,\ldots,X_n)\in \FF\langle \bar{X}\rangle$. If $f$ is linear in $X_n$, then there exists 
$g=g(X_1,\ldots,X_{n-1})\in \FF\langle \bar{X}\rangle$ such that $f\csim gX_n$.
\end{lemma}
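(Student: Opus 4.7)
The plan is to reduce to a single monomial and exploit that a commutator is precisely the difference between two ``cyclic rotations'' of a word.

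Since $f$ is linear in $X_n$, every monomial of $f$ contains $X_n$ exactly once, so we may write
\[
f=\sum_{i}\lambda_i\, u_i X_n v_i,
\]
where $\lambda_i\in\FF$ and $u_i,v_i\in\FF\langle X_1,\ldots,X_{n-1}\rangle$. The key observation is the identity
\[
u_i X_n v_i - v_i u_i X_n = [\,u_i X_n,\,v_i\,],
\]
which shows that $u_i X_n v_i \csim v_i u_i X_n$ for every $i$. Summing with coefficients $\lambda_i$, we obtain $f \csim g X_n$, where
\[
g := \sum_i \lambda_i\, v_i u_i \;\in\; \FF\langle X_1,\ldots,X_{n-1}\rangle,
\]
which is precisely a polynomial $g(X_1,\ldots,X_{n-1})$ as required.

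There is no real obstacle here: the whole content is the observation that the variable $X_n$ can be cyclically rotated to the right end of each monomial at the cost of a single commutator, and this is a purely combinatorial manipulation of words. No appeal to the earlier structural results is needed.
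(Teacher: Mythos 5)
Your proof is correct and follows essentially the same route as the paper: reduce to a single monomial $u X_n v$ and use the identity $u X_n v - v u X_n = [uX_n, v]$ to rotate $X_n$ to the right. Nothing further is needed.
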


\begin{proof}
It suffices to treat the case when $f$ is a monomial, that is $f= mX_nm'$ where $m$ and $m'$ are monomials in $X_1,\ldots,X_{n-1}$. But then the result follows immediately from the identity
$mX_nm'  - m'mX_n = [mX_n,m']$.
\end{proof}

\begin{lemma}\label{L1*}
Let $f = f(X_1,\ldots,X_n,X_1^*,\ldots, X_n^*)\in \FF\langle \bar{X},\bar X^*\rangle$.
 If $f$ is linear in $X_n$, then there exist 
$g=g(X_1,\ldots,X_{n-1},X_1^*,\ldots, X_{n-1}^*)\in \FF\langle \bar{X},\bar X^*\rangle$ and $ g' = g'(X_1,\ldots,X_{n-1},X_1^*,\ldots, X_{n-1}^*) \in \FF\langle \bar{X},\bar X^*\rangle$ such that $f\csim gX_n + X_n^*g'$.
\end{lemma}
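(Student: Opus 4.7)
The plan is to mimic the proof of Lemma \ref{L1}, splitting according to whether $X_n$ or $X_n^*$ appears in each monomial. Since cyclic equivalence is an $\FF$-linear equivalence relation and $f$ is linear in $X_n$, we may reduce to the case where $f$ is a single monomial containing exactly one occurrence of either $X_n$ or $X_n^*$ (with no occurrence of the other). Thus $f$ is of one of the two forms
\begin{equation*}
f = m X_n m' \qquad\text{or}\qquad f = m X_n^* m',
\end{equation*}
where $m,m'$ are monomials in the variables $X_1,\ldots,X_{n-1},X_1^*,\ldots,X_{n-1}^*$.

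In the first case, the identity
\begin{equation*}
m X_n m' - m' m X_n = [m X_n, m']
\end{equation*}
shows that $m X_n m' \csim (m'm) X_n$, which has the desired form $gX_n$ with $g = m'm$ and $g'=0$. In the second case, we instead use
\begin{equation*}
m X_n^* m' - X_n^* m' m = [m, X_n^* m'],
\end{equation*}
which yields $m X_n^* m' \csim X_n^* (m'm)$, of the form $X_n^* g'$ with $g' = m'm$ and $g=0$.

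Collecting the two cases and summing over all monomials of $f$ produces the required $g,g'\in\FF\langle \bar X,\bar X^*\rangle$ in the variables $X_1,\ldots,X_{n-1},X_1^*,\ldots,X_{n-1}^*$ with $f \csim g X_n + X_n^* g'$. There is no real obstacle here; the only subtlety compared to Lemma \ref{L1} is the asymmetric placement of $X_n$ versus $X_n^*$ in the target form, which is precisely why one uses two different commutator identities (bringing $X_n$ to the right and $X_n^*$ to the left).
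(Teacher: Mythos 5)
Your proof is correct and follows essentially the same route as the paper: reduce to a single monomial, then apply the identity $mX_nm' - m'mX_n = [mX_n,m']$ in the $X_n$ case and $mX_n^*m' - X_n^*m'm = [m,X_n^*m']$ in the $X_n^*$ case. No changes needed.
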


\begin{proof}
 The proof is basically the same as the proof of Lemma \ref{L1}. It suffices to consider the case where $f$ is a monomial. If  $f= mX_nm'$ then use $mX_nm'  - m'mX_n = [mX_n,m']$, and if 
$f= mX_n^*m'$ then use $mX_n^*m'  - X_n^*m'm = [m,X_n^*m']$.
 \end{proof}

\subsection{Polynomials in $\FF\langle \bar{X}\rangle$}
Consider first $\A = M_d(\FF)$. Let $f\in \FF\langle \bar{X}\rangle$.
Theorem \ref{T1} and Lemma \ref{Lfolk} imply that $\Span f(\A)$
 can be either $0$, $\Z$, $[\A,\A]$  or
$\A$. Each of the four possibilities indeed occurs. Finding polynomials $f$ such that $\Span f(\A)$ is either  $[\A,\A]$   or
$\A$ is trivial (say, take  $X_1X_2-X_2X_1$ and $X_1$). Since $\A$ is a PI algebra, we can find (nonzero) polynomials 
$f$ such that $\Span f(\A) = 0$. The existence of polynomials
$f$ such that  $\Span f(\A)=\Z$ is nontrivial. These are  the so-called {\em central polynomials}, i.e., polynomials which are not identities on $\A$ but all their values lie in $\Z$. In the early 70's Formanek \cite{F} and Razmyslov \cite{R} independently proved that for every $d\ge 2$ there exist central polynomials on $M_d(\FF)$.

Instead of $M_d(\FF)$ we could consider any prime PI algebra $\A$, just that we then have to deal with the linear span of  $\Span f(\A)$ over the field of fractions $\wt{\Z}$ of $\A$. Again we arrive at four possibilities.
Our goal is to determine when each of them occurs.

We use the same notation as above, i.e., the central closure of $\A$ is denoted by $\wt{\A}$, and 
the linear span of $\V\subseteq\A$ over $\wt{\Z}$ is denoted by
$\wt{\V}$. 

\begin{theorem} \label{T2}
Let  $\A$ be a noncommutative prime PI algebra, let  $f \in \FF\langle\bar{X}\rangle$, and let us write $\cL:=\Span f(\A)$. If $\Char (\FF)=0$, then exactly one of the following four possibilities holds:
\begin{enumerate}[\rm (i)]
\item $f$ is an identity of $\A$; in this case $\wt{\cL}=0$;
\item  $f$ is a central polynomial of $\A$; in this case $\wt{\cL}=\wt{\Z}$;
\item  $f$ is not an identity of $\A$, but is  cyclically equivalent to an identity of $\A$; in this case $\wt{\cL}=[\wt{\A},\wt{\A}]$;
\item  $f$ is not a central polynomial of $\A$ and is not cyclically equivalent to an identity of $\A$; in this case $\wt{\cL}=\wt{\A}$.
\end{enumerate}
\end{theorem}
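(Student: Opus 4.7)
The plan is to use Theorem~\ref{T1} and Proposition~\ref{ppip} to pin down $\wt{\cL}$ to one of four possibilities, and then biject these with the four cases (i)--(iv). By Theorem~\ref{T1}, $\cL=\Span f(\A)$ is a Lie ideal of $\A$, so $\wt{\cL}$ is a Lie ideal of $\wt{\A}$; and since $\Char(\FF)=0$, Proposition~\ref{ppip} forces $\wt{\cL}\in\{0,\wt{\Z},[\wt{\A},\wt{\A}],\wt{\A}\}$. I first observe that these four spaces are pairwise distinct in our setting: $\wt{\Z}\neq 0$ because $\A$ is a nonzero prime PI algebra (Posner), $[\wt{\A},\wt{\A}]\neq 0$ because $\A$ is noncommutative, and $\wt{\Z}\cap[\wt{\A},\wt{\A}]=0$ because in characteristic~$0$ the reduced trace $\tr$ on the central simple algebra $\wt{\A}$ sends $0\neq z\in\wt{\Z}$ to $(\deg\wt{\A})z\neq 0$ yet kills every commutator. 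I will also use $\wt{\Z}\cap\A=\Z$, which follows from primeness: if $a\in\A$ satisfies $za\in\Z$ for some $0\neq z\in\Z$, then $z[a,b]=0$ for all $b\in\A$, so $[a,b]=0$ and $a\in\Z$.

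Three of the four correspondences are then essentially free. Namely, $\wt{\cL}=0$ is equivalent to $\cL=0$, i.e.\ to (i); next, $\wt{\cL}=\wt{\Z}$ forces $\cL\subseteq\wt{\Z}\cap\A=\Z$ with $\cL\neq 0$, which is exactly case (ii), and the converse is immediate; finally, for the forward direction of (iii), if $f\csim g$ with $g$ an identity of $\A$ then $f(\bar a)\in[\A,\A]\subseteq[\wt{\A},\wt{\A}]$ for all $\bar a\in\A^n$, so $\wt{\cL}\subseteq[\wt{\A},\wt{\A}]$; combined with $\wt{\cL}\neq 0$ and the disjointness observations above, this gives $\wt{\cL}=[\wt{\A},\wt{\A}]$.

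The heart of the proof will be the converse of (iii), a tracial Nullstellensatz for $\A$: if $\wt{\cL}\subseteq[\wt{\A},\wt{\A}]$, equivalently $\tr f(\bar a)=0$ for every $\bar a\in\A^n$, then $f\csim g$ for some identity $g$ of $\A$. My plan proceeds in three steps. (a)~Reduce to $f$ multihomogeneous by applying Lemma~\ref{R1} to the expansion of $\tr f(\lambda a_1,a_2,\ldots,a_n)$ in $\lambda$, and iterating over the remaining variables. (b)~Reduce to $f$ multilinear by complete polarization: for $f$ of multidegree $(k_1,\ldots,k_n)$, let $\tilde f(X_{11},\ldots,X_{n,k_n})$ be the coefficient of $t_{11}t_{12}\cdots t_{n,k_n}$ in $f\bigl(\sum_j t_{1j}X_{1j},\ldots,\sum_j t_{nj}X_{nj}\bigr)$. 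Then $\tilde f$ is multilinear, inherits the trace-zero property by a Vandermonde argument in the $t_{ij}$'s, and satisfies $\tilde f(X_1,\ldots,X_1,\ldots,X_n,\ldots,X_n)=k_1!\cdots k_n!\,f$; a cyclic equivalence $\tilde f\csim\tilde g$ with $\tilde g$ an identity therefore specializes and scales, in characteristic~$0$, back to $f\csim (k_1!\cdots k_n!)^{-1}\tilde g|_{\mathrm{spec}}$ with the right side an identity of $\A$. (c)~For multilinear $f$, Lemma~\ref{L1} gives $f\csim gX_n$ for some $g\in\FF\langle X_1,\ldots,X_{n-1}\rangle$, so for every $\bar a$
\[
0=\tr f(\bar a)=\tr\bigl(g(\bar a_{n-1})\,a_n\bigr).
\]
By $\wt{\Z}$-linearity of $\tr$ this extends to $\tr\bigl(g(\bar a_{n-1})\,\wt a\bigr)=0$ for all $\wt a\in\wt{\A}$, and nondegeneracy of the reduced trace form on the central simple algebra $\wt{\A}$ then forces $g(\bar a_{n-1})=0$; hence $g$, and so $gX_n$, is an identity of $\A$.

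Case (iv) follows by elimination: if $f$ lies in none of (i)--(iii), then $\wt{\cL}\notin\{0,\wt{\Z},[\wt{\A},\wt{\A}]\}$, leaving $\wt{\cL}=\wt{\A}$. The main obstacle I foresee is step (b) of the tracial Nullstellensatz, where the polarization bookkeeping must correctly propagate both the trace condition and the cyclic equivalence through the substitution and specialization; the short $\wt{\Z}$-linearity argument in (c) is the crucial bridge that lets one invoke nondegeneracy of the trace form on the central simple algebra $\wt{\A}$ rather than on $\A$ itself, where no such pairing is available.
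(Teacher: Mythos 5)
Your proposal is correct, and its skeleton matches the paper's: Theorem \ref{T1} plus Proposition \ref{ppip} pin $\wt{\cL}$ down to $\{0,\wt{\Z},[\wt{\A},\wt{\A}],\wt{\A}\}$, the disjointness $\wt{\Z}\cap[\wt{\A},\wt{\A}]=0$ and the easy implications dispose of (i), (ii) and the forward half of (iii), and everything hinges on the converse of (iii). Where you diverge is in how that converse is proved. The paper does not fully polarize: after reducing to a multihomogeneous component $h$ that is not cyclically equivalent to an identity, it inducts on the degree $k$ of $h$ in $X_n$, using the partial linearization $h'=h(\ldots,X_n+X_{n+1})-h(\ldots,X_n)-h(\ldots,X_{n+1})$ and recovering $h$ from $h'(\ldots,X_n,X_n)=(2^k-2)h$; you instead polarize completely in one step and divide by $k_1!\cdots k_n!$. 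Both are legitimate in characteristic $0$, and your route trades the induction for the (slightly heavier, but routine) bookkeeping that the trace condition and cyclic equivalence survive polarization and re-specialization — which they do, since specialization is an algebra endomorphism of the free algebra and hence preserves sums of commutators and identities of $\A$. The second difference is at the very bottom: from $w\wt{\A}\subseteq[\wt{\A},\wt{\A}]$ the paper deduces $w=0$ by writing $1=\sum_i u_iwv_i$ via simplicity of $\wt{\A}$ and landing $1$ in $[\wt{\A},\wt{\A}]$, whereas you invoke nondegeneracy of the reduced trace form on the central simple algebra $\wt{\A}$. These are equivalent in substance (your reformulation of $\wt{\cL}\subseteq[\wt{\A},\wt{\A}]$ as vanishing of the reduced trace silently uses that $[\wt{\A},\wt{\A}]$ is exactly the reduced-trace-zero hyperplane in characteristic $0$, a fact the paper also relies on when identifying $[\wt{\A},\wt{\A}]$ after scalar extension); the paper's version avoids naming the trace form, yours makes the ``tracial'' content more explicit and connects more directly to Corollary \ref{C1}. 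No gaps.
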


\begin{proof} Theorem \ref{T1} and
Proposition \ref{ppip} tell us that $\wt{\cL}$ is either $0$, $\wt{\Z}$, $[\wt{\A},\wt{\A}]$ or $\wt{\A}$. 

 We claim that $\wt{\Z}\cap [\wt{\A},\wt{\A}] = 0$. A standard scalar extension argument shows that it suffices to prove this for the case where  $\wt{\A} = M_d(\FF)$. In this case the desired conclusion follows from the fact that the trace of the identity matrix is $d $ which is, as   $\Char(\FF) =0$, different from $0$.

Suppose first that $f$ is cyclically equivalent to an identity. Then 
$f(\A)\subseteq [\A,\A]$ and hence $\wt{\cL}\subseteq [\wt{\A},\wt{\A}]$. Since $\wt{\Z}\cap [\wt{\A},\wt{\A}] = 0$, there are  only two possibilities: either $\wt{\cL}=0$ or $\wt{\cL}=[\wt{\A},\wt{\A}]$.
 If $f$ itself is an identity, then of course (i) holds. If $f$ is not an identity, then $\cL\ne 0$ and so (iii) must hold.

 Assume now that $f$ is not cyclically equivalent to an identity. If $f$ is a central polynomial, then (ii) holds. Assume therefore that $f$ is not a central polynomial. We must show that $\wt{\cL}=\wt{\A}$. Obviously, $\wt{\cL}\ne 0$ and $\wt{\cL}\ne \wt{\Z}$.
   We still have to eliminate the possibility that $\wt{\cL}=[\wt{\A},\wt{\A}]$.
 Assume that this possibility actually occurs, so in particular $f(\A) \subseteq [\wt{\A},\wt{\A}]$.
 Writing $f$ as a sum of multihomogeneous polynomials, and then arguing as at the beginning of the proof of Theorem \ref{T1} we see that each of these homogeneous components has the same property that its values lie in $[\wt{\A},\wt{\A}]$.  It is obvious that at least one of these summands is not cyclically equivalent to an identity. Thus, there exists a multihomogeneous polynomial, let us call it $h=h(X_1,\ldots,X_n)$, which
 is not cyclically equivalent to an identity and has the property $h(\A)  \subseteq [\wt{\A},\wt{\A}]$.   We will show that this is impossible by induction on the degree  of $h$ with respect to $X_n$.  Let us denote this degree by $k$. If $k=1$, then we can use Lemma \ref{L1} to find a polynomial $g=g(X_1,\ldots,X_{n-1})$ such that
  $h \csim gX_n$. Consequently, $(gX_n)(\A)\subseteq [\wt{\A},\wt{\A}]$. Pick $a_1,\ldots,a_{n-1}\in\A$
  and write $w= g(a_1,\ldots,a_{n-1})$. Then $wx\in [\wt{\A},\wt{\A}]$ for every $x\in\A$, which clearly implies that 
  the same is true for every $x\in\wt{\A}$. 
  If $w\ne 0$, then because of the simplicity of $\wt{\A}$ there exist $u_i,v_i\in \wt{\A}$ such that $1= \sum_i u_iwv_i$. But then 
$$1 = \sum_i [u_i,wv_i] + w\sum_i v_iu_i \in [\wt{\A},\wt{\A}],$$
contradicting $\wt{\Z}\cap [\wt{\A},\wt{\A}] = 0$. Thus $w=0$, i.e.,  $g(a_1,\ldots,a_{n-1})=0$ for all $a_i\in\A$.
 That is,  $g$, and hence also $gX_n$, is an identity of $\A$. This contradicts our assumption that $h$ is not cyclically equivalent to an identity. Now let $k >1$ and consider the polynomial
 \begin{align*}
  h'(X_1,\ldots,X_{n},X_{n+1}) &= h(X_1,\ldots,X_{n-1},X_n + X_{n+1})\\& - h(X_1,\ldots,X_{n-1},X_n) - h(X_1,\ldots,X_{n-1}, X_{n+1}).
 \end{align*}
 Obviously the values of $h'$ also lie in
 $[\wt{\A},\wt{\A}]$, and so the same is true for each of multihomogeneous components of $h'$.  Since the degree 
 in $X_n$  of each of these components is smaller than $k$, the induction assumption implies that 
 each of them is  cyclically equivalent to an identity. But then $h'$ itself  is cyclically equivalent to an identity. However, since
 $$
 h(X_1,\ldots,X_n) = \frac{1}{2^k -2} h(X_1,\ldots,X_{n},X_{n})
 $$
 it follows that $h$ is also cyclically equivalent to an identity - a contradiction.
\end{proof}

We record the following two easily obtained corollaries related to \cite[Theorem 2.1]{KS}.
We call them tracial Nullstellens\"atze; the first one deals with
the non-dimensionfree setting and the second one is dimensionfree.

\begin{corollary} \label{C1}
Let $d\ge 2$,  let  $\Char (\FF)=0$  and let $f = f(X_1,\ldots,X_n) \in \FF\langle\bar{X}\rangle$.
Then $\tr(f(A_1,\ldots,A_n)) =0$ for all $A_i\in M_d(\FF)$ if and only if $f$ is cyclically equivalent to an identity of $ M_d(\FF)$.
\end{corollary}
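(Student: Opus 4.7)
The plan is to derive this corollary as a fairly direct consequence of Theorem \ref{T2} applied to the central simple algebra $\A=M_d(\FF)$. Recall that for this $\A$ we have $\Z=\FF$ and $\wt{\A}=\A$, $\wt{\Z}=\FF$, so the $\wt{\,\cdot\,}$ appearing in Theorem \ref{T2} may be dropped. The key ancillary fact is that the set of trace-zero matrices in $M_d(\FF)$ coincides with $[\A,\A]$, a standard identification used repeatedly in the paper.

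For the ``if'' direction, suppose $f\csim g$ for some polynomial identity $g$ of $M_d(\FF)$. Then $f-g$ is a sum of commutators in $\FF\langle\bar X\rangle$, so for every tuple $A_1,\ldots,A_n$ of matrices, $f(A_1,\ldots,A_n)-g(A_1,\ldots,A_n)$ is a sum of commutators in $M_d(\FF)$, hence has trace zero. Since $g$ is an identity, $g(A_1,\ldots,A_n)=0$, and thus $\tr(f(A_1,\ldots,A_n))=0$.

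For the ``only if'' direction, assume $\tr(f(A_1,\ldots,A_n))=0$ for all $A_i\in M_d(\FF)$. Then $\cL:=\Span f(\A)$ is contained in the set of trace-zero matrices, i.e., $\cL\subseteq[\A,\A]$. By Theorem \ref{T2} applied to the noncommutative prime PI algebra $\A=M_d(\FF)$, exactly one of four cases holds, and $\cL\in\{0,\Z,[\A,\A],\A\}$. The possibility $\cL=\A$ is ruled out because $[\A,\A]$ is a proper subspace of $\A$ (it has codimension one). The possibility $\cL=\Z$ is also ruled out: since $\Char(\FF)=0$, the identity matrix has trace $d\ne 0$, so $\Z\cap[\A,\A]=0$, and $\Z\subseteq[\A,\A]$ would force $\Z=0$, which is absurd.

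Hence only cases (i) and (iii) of Theorem \ref{T2} remain, and in both cases $f$ is cyclically equivalent to an identity of $M_d(\FF)$ (trivially so in case (i), and by definition in case (iii)). This completes the proof. The argument is essentially a direct reading of Theorem \ref{T2}, so no serious obstacle is anticipated; the one point requiring a moment's care is verifying that $\Z\cap[\A,\A]=0$, which rests on the characteristic-zero hypothesis.
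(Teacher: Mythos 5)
Your proof is correct and follows exactly the route the paper intends: the corollary is read off from Theorem \ref{T2}, with the trace-zero hypothesis forcing $\Span f(\A)\subseteq[\A,\A]$ and the observation $\Z\cap[\A,\A]=0$ (valid since $\Char(\FF)=0$) eliminating cases (ii) and (iv), leaving precisely cases (i) and (iii) in which $f$ is cyclically equivalent to an identity. The converse direction is the same trivial trace computation the paper has in mind.
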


\begin{corollary} \label{C2}
Suppose  $\Char (\FF)=0$  and let $f = f(X_1,\ldots,X_n) \in \FF\langle\bar{X}\rangle$.
Then $\tr(f(A_1,\ldots,A_n)) =0$ for all $A_i\in M_d(\FF)$ and all $d\ge 2$ if and only if $f\csim 0$.
\end{corollary}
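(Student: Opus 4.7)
The forward direction is immediate, since every commutator in $M_d(\FF)$ has trace zero. For the converse, the plan is to combine Corollary \ref{C1}, applied for every $d \ge 2$, with a degree bound on identities of matrix algebras. For each $d$, Corollary \ref{C1} supplies a polynomial $g_d \in \FF\ax$ that is an identity of $M_d(\FF)$ and satisfies $f \csim g_d$; the task is to leverage the freedom in $d$ to conclude $f \csim 0$.

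First, I would reduce to the multihomogeneous case. Writing $f = \sum_i f_i$ as a sum of multihomogeneous components and applying a Vandermonde argument in the spirit of Lemma \ref{R1} (as at the opening of the proof of Theorem \ref{T1}) to the $\lambda$-polynomial $\tr(f(\lambda A_1, A_2, \ldots, A_n))$, each $f_i$ inherits the trace hypothesis. So it suffices to prove $f \csim 0$ under the assumption that $f$ is multihomogeneous of some multidegree $\bar d$, of total degree $N$.

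Next, I would exploit the fact that projection onto a fixed multidegree commutes with cyclic equivalence. Concretely, the $\bar d$-component of a commutator $[p,q]$ equals $\sum_{\bar a + \bar b = \bar d}[p_{\bar a}, q_{\bar b}]$, which is again a sum of commutators. Thus from $f \csim g_d$ I extract $f = f_{\bar d} \csim (g_d)_{\bar d}$, and since $\FF$ is infinite, $(g_d)_{\bar d}$ remains an identity of $M_d(\FF)$ (multihomogeneous components of identities over infinite fields are identities). Choosing $d$ with $2d > N$ and invoking the Amitsur--Levitzki theorem, which asserts that every nonzero identity of $M_d(\FF)$ has degree at least $2d$, forces $(g_d)_{\bar d} = 0$, hence $f \csim 0$.

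The main external ingredient beyond the results already in the excerpt is the Amitsur--Levitzki degree lower bound; everything else is routine bookkeeping with multihomogeneous components, Vandermonde, and the compatibility of cyclic equivalence with taking graded parts.
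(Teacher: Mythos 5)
Your proof is correct and follows the route the paper intends: Corollary \ref{C2} is stated there without proof as an ``easily obtained'' consequence of Corollary \ref{C1}, and your argument supplies exactly the missing details. In particular you correctly identify and resolve the one genuinely delicate point --- that the identity $g_d$ with $f\csim g_d$ depends on $d$ --- by projecting onto the multidegree of $f$ (noting that taking graded components preserves both sums of commutators and, over an infinite field, identities) and then killing $(g_d)_{\bar d}$ for $2d>\deg f$ via the Amitsur--Levitzki bound; any lower bound showing a fixed nonzero polynomial is not an identity of $M_d(\FF)$ for $d$ large would serve equally well.
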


\subsection{Polynomials in $\FF\langle \bar{X}, \bar X^*\rangle$} \label{sec:starEval}
Our aim now is  to obtain versions of Theorem \ref{T2} for polynomials in $\FF\langle \bar{X}, \bar X^*\rangle$.
The situation is  easier
for involutions of the second kind.

We continue to use the notation from the previous subsection.

\begin{theorem}  \label{cor:T2star}
Let  $\A$ be a noncommutative prime PI algebra with involution of the second kind, let  $f \in \FF\langle\bar{X}, \bar{X}^*\rangle$, and let us write $\cL:=\Span f(\A)$. If $\Char (\FF)=0$, then exactly one of the following four possibilities holds:
\begin{enumerate}[\rm (i)]
\item $f$ is an identity of $\A$; in this case $\wt{\cL}=0$;
\item  $f$ is a central polynomial of $\A$; in this case $\wt{\cL}=\wt{\Z}$;
\item  $f$ is not an identity of $\A$, but is  cyclically equivalent to an identity of $\A$; in this case $\wt{\cL}=[\wt{\A},\wt{\A}]$;
\item  $f$ is not a central polynomial of $\A$ and is not cyclically equivalent to an identity of $\A$; in this case $\wt{\cL}=\wt{\A}$.
\end{enumerate}
\end{theorem}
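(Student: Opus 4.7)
The plan is to follow the strategy used for Theorem \ref{T2}, substituting the new tools available in the involution setting. First, Theorem \ref{prop:T1} gives that $\cL = \Span f(\A)$ is a Lie skew-ideal of $\A$, and then Theorem \ref{TPIsecond} pins down $\wt{\cL}$ to one of the four subspaces $0$, $\wt{\Z}$, $[\wt{\A},\wt{\A}]$, $\wt{\A}$. The four cases of the theorem are then associated with these four subspaces.

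Next I would verify the key disjointness statement $\wt{\Z}\cap[\wt{\A},\wt{\A}]=0$ by the same scalar-extension-plus-trace argument as in the proof of Theorem \ref{T2}: it suffices to check it after passing to $\ov{\A}\cong M_d(\ov{\Z})$, where it follows from $\tr(I)=d\ne 0$ in characteristic $0$. Once this is in hand, the easy direction of the case analysis is immediate. If $f$ is an identity, $\wt{\cL}=0$; if $f$ is central but not an identity, $\wt{\cL}=\wt{\Z}$; if $f\csim$ an identity but is not itself an identity, then $f(\A)\subseteq[\A,\A]\subseteq[\wt{\A},\wt{\A}]$ together with $\wt{\cL}\ne 0$ forces $\wt{\cL}=[\wt{\A},\wt{\A}]$. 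The only remaining task is to show that in case (iv) we cannot have $\wt{\cL}=[\wt{\A},\wt{\A}]$, i.e., that $f(\A)\subseteq [\wt{\A},\wt{\A}]$ already forces $f\csim$ an identity.

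The core step is thus: if $h=h(X_1,\ldots,X_n,X_1^*,\ldots,X_n^*)\in\FF\langle\bar X,\bar X^*\rangle$ is multihomogeneous, $h(\A)\subseteq[\wt{\A},\wt{\A}]$, and $h$ is not cyclically equivalent to an identity of $\A$, then we derive a contradiction by induction on the degree $k$ of $h$ in $X_n$. For $k=1$, Lemma \ref{L1*} gives $h\csim gX_n+X_n^*g'$ for suitable $g,g'$ in the remaining variables. Fixing $a_1,\ldots,a_{n-1}\in\A$, set $w=g(a_1,\ldots,a_{n-1},a_1^*,\ldots,a_{n-1}^*)$ and $w'=g'(\cdots)$. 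Since $*$ is of the second kind, there exist a symmetric $\alpha\in\Z$ and a nonzero skew-symmetric $\beta\in\Z$; substituting $X_n\mapsto\alpha a$ and $X_n\mapsto\beta a$ (so $X_n^*\mapsto\alpha a^*$ and $X_n^*\mapsto-\beta a^*$ respectively) and taking $\wt{\Z}$-linear combinations yields $wa\in[\wt{\A},\wt{\A}]$ and $a^*w'\in[\wt{\A},\wt{\A}]$ for every $a\in\A$, hence for every $a\in\wt{\A}$. If $w\ne 0$, simplicity of $\wt{\A}$ gives $1\in[\wt{\A},\wt{\A}]$, contradicting $\wt{\Z}\cap[\wt{\A},\wt{\A}]=0$; so $w=0$ and similarly $w'=0$, i.e., $g$ and $g'$ are identities of $\A$, whence $h$ itself is cyclically equivalent to an identity, a contradiction. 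The induction step $k>1$ is the usual polarization in $X_n$ (forming $h(X_1,\ldots,X_{n-1},X_n+X_{n+1},X_1^*,\ldots,X_{n-1}^*,X_n^*+X_{n+1}^*)$ minus the two pure terms), which has strictly smaller degree in each of $X_n,X_{n+1}$, allowing the inductive hypothesis to apply and conclude via the multinomial recovery formula.

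The only genuinely new ingredient compared to Theorem \ref{T2} is the base case $k=1$: one must decouple the $X_n$ and $X_n^*$ contributions, and this is precisely where the second-kind hypothesis plays its role, providing enough independent scalar substitutions ($\alpha$ symmetric and $\beta$ skew) in the center to split $gX_n+X_n^*g'$ into its two summands modulo $[\wt{\A},\wt{\A}]$. Everything else is a direct adaptation of the proof of Theorem \ref{T2}.
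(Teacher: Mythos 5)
Your proposal is correct and follows essentially the same route as the paper, which itself only sketches this proof as "the same as Theorem \ref{T2}" modulo three modifications (using Theorem \ref{TPIsecond} in place of Proposition \ref{ppip}, restricting the homogenization scalars to those fixed by the involution, and replacing Lemma \ref{L1} by Lemma \ref{L1*}). The one step the paper leaves implicit — that the second-kind hypothesis lets one split $(gX_n+X_n^*g')(\A)\subseteq[\wt{\A},\wt{\A}]$ into the two separate containments — is exactly the decoupling you carry out with the central skew-symmetric scalar $\beta$, and your argument for it is valid.
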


\begin{proof}
Not only the formulation, also the proof of this theorem is almost literally the same as the proof of Theorem \ref{T2}. Let us therefore just point out a few instances where small changes are necessary. Firstly, one of course has to use Theorem \ref{TPIsecond} (rather than Proposition \ref{ppip}) to conclude that $\wt{\cL}$ is $0$, $\wt{\Z}$, $[\wt{\A},\wt{\A}]$ or $\wt{\A}$. Secondly, for the reduction to  multihomogeneous polynomials one has to make use only of those scalars that the involution on $\FF$ fixes. They also form a field with characteristic $0$, so the same argument works.
Thirdly and finally, instead of Lemma \ref{L1} one has to use Lemma \ref{L1*} and thereby conclude that 
 $h\csim gX_n + X_n^*g'$, and hence $(gX_n + X_n^*g')(\A)\subseteq [\wt{\A},\wt{\A}]$. Since the involution is of the second kind this clearly implies that both $(gX_n)(\A)$ and $(X_n^*g')(\A)$ lie in $[\wt{\A},\wt{\A}]$. From this point on the necessary changes are completely obvious.
\end{proof}

For an involution of the first kind the situation is somewhat more
complicated since  Theorem \ref{TPI} yields eight possible classes.

For the ease of exposition we introduce some notation to be used in the next theorem.
Let $\A$ be a  PI algebra endowed with a (fixed) involution $\ast$.  By $\id(\A)$ we denote the set
of all polynomial identities of $\A$ in $\FF\langle\bar{X},\bar X^*\rangle$. At this point it seems appropriate to mention that if an algebra satisfies a nontrivial identity in $\FF\langle\bar{X},\bar X^*\rangle$, then it also satisfies a nontrivial 
identity in $\FF\langle\bar{X}\rangle$ \cite{Amit}; this is why in the $\ast$-algebra context we confine ourselves to  (usual) PI algebras. Next, by $\cen(\A)$ we denote
the set of all central polynomials of $\A$ in $\FF\langle\bar{X}\rangle$.
Note that  $\id(\A)$ and $\cen(\A)$  depend on the involution chosen. 

\begin{theorem} \label{thm:T2invo}
Let  $\A$ be a prime PI algebra with involution of the first kind, let $f \in \FF\langle\bar{X},\bar X^*\rangle$, and let us write $\cL:=\Span f(\A)$.
If  $\dim_{\wt{\Z}} {\wt{\A}} \ne 1, 4, 16$ and  $\Char(\FF) =0$, then exactly one of the following eight possibilities holds:
\begin{enumerate}[\rm (i)]
\item $f\in\id(\A)$; in this case $\wt{\cL}=0$;
\item  $f\in\cen(\A)$; in this case $\wt{\cL}=\wt{\Z}$;
\item $f\in\Skew + \id(\A)$ and $f\not\in\id(\A)$; in this case $\wt{\cL}=\wt{\K}$;
\item $f\in\Skew + \cen(\A)$ and $f\not\in\cen(\A)$; in this case $\wt{\cL}=\wt{\Z} + \wt{\K}$;
\item $f\in\Sym + \id(\A)$, $f\not\in\id(\A)$ and $f$ is cyclically equivalent to an element of $\id(\A)$; in this case $\wt{\cL}=[\wt{\cS},\wt{\K}]$;
\item $f\in\Sym + \id(\A)$, $f\not\in\cen(\A)$ and $f$ is not cyclically equivalent to an element of $\id(\A)$; in this case $\wt{\cL}=\wt{\cS}$;
\item  $f\not\in\Sym + \id(\A)$, $f\not\in\Skew + \id(\A)$, 
 and $f+f^*$ is
cyclically equivalent to an element of $\id(\A)$; in this case $\wt{\cL}=[\wt{\A},\wt{\A}]$;
\item $f\not\in\Sym + \id(\A)$, $f\not\in\Skew + \id(\A)$, 
$f\not\in\Skew + \cen(\A)$ and $f+f^*$ is not cyclically equivalent to an element of $\id(\A)$; in this case $\wt{\cL}=\wt{\A}$.
\end{enumerate}
\end{theorem}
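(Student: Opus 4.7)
The plan is to first apply Theorem \ref{prop:T1} to see that $\cL = \Span f(\A)$ is a Lie skew-ideal of $\A$, so $\wt{\cL}$ is a Lie skew-ideal of $\wt{\A}$. Since $\wt{\A}$ is central simple over $\wt{\Z}$ of dimension $\neq 1, 4, 16$ with the extended first-kind involution, Corollary \ref{cor:csaLieSkew}(1) leaves only eight possibilities for $\wt{\cL}$, namely the eight subspaces listed in (i)--(viii). These are pairwise distinct: after scalar extension to $\ov{\A} \cong M_d(\ov{\Z})$, the reduced trace $\tau$ is nonzero on $\wt{\Z}$ in characteristic $0$, vanishes exactly on $[\wt{\A}, \wt{\A}]$, and is invariant under the first-kind involution, so combining this with the $\pm 1$ eigenspace decomposition $\wt{\A} = \wt{\cS} \oplus \wt{\K}$ separates all eight.

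The bulk of the argument translates the geometric statement $\wt{\cL} = M$ (for each of the eight $M$) into the polynomial conditions (i)--(viii). The key ingredients are five containment equivalences: $\wt{\cL} \subseteq \wt{\cS}$ iff $f \in \Sym + \id(\A)$; $\wt{\cL} \subseteq \wt{\K}$ iff $f \in \Skew + \id(\A)$; $\wt{\cL} \subseteq \wt{\Z}$ iff $f \in \id(\A) \cup \cen(\A)$; $\wt{\cL} \subseteq \wt{\Z} + \wt{\K}$ iff $f \in \Skew + (\id(\A) \cup \cen(\A))$; and $\wt{\cL} \subseteq [\wt{\A}, \wt{\A}]$ iff $f + f^*$ is cyclically equivalent to an identity of $\A$. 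The first four of these follow painlessly from the decomposition $f = \frac{1}{2}(f + f^*) + \frac{1}{2}(f - f^*)$ together with the first-kind containment $\wt{\Z} \subseteq \wt{\cS}$. With these in hand the eight cases read off directly; for instance case (v) combines the first and fifth equivalences and uses $\wt{\cS} \cap [\wt{\A}, \wt{\A}] = [\wt{\cS}, \wt{\K}]$ (itself a consequence of the Lie skew-ideal classification, as the intersection is a nonzero Lie skew-ideal contained in both factors), while case (vii) combines the fifth equivalence with the failure of the first two.

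The technical heart, and the main obstacle, is the nontrivial direction of the fifth equivalence: if the symmetric polynomial $g := f + f^*$ has all values in $[\wt{\A}, \wt{\A}]$, then $g \csim e$ for some $e \in \id(\A)$. I plan to mimic the degree-induction from the proof of Theorem \ref{T2}. After reducing to the multihomogeneous case via Lemma \ref{R1}, induct on the degree $k$ of $g$ in $X_n$. When $k = 1$, Lemma \ref{L1*} gives $g \csim h X_n + X_n^* h'$ with $h, h'$ involving only the earlier variables. Evaluating at $(\bar a, \bar a^*)$ and setting $w = h(\bar a, \bar a^*)$, $w' = h'(\bar a, \bar a^*)$, the vanishing of $\tau$ on $g$'s values together with $\tau(a_n^* w') = \tau(w'^* a_n)$ (invariance of $\tau$ under the first-kind involution) yields $\tau((w + w'^*) a_n) = 0$ for every $a_n \in \wt{\A}$, whence $w + w'^* = 0$ by nondegeneracy of the trace pairing. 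Therefore $h' + h^* \in \id(\A)$, and substituting back gives $g \csim h X_n - (h X_n)^* \pmod{\id(\A)}$, a skew polynomial; combining with the symmetry $g^* = g$ and adding the two relations yields $2g \csim 0 \pmod{\id(\A)}$, as required. The inductive step is the standard polarization trick, replacing $X_n$ by $X_n + X_{n+1}$ (and $X_n^*$ by $X_n^* + X_{n+1}^*$) to extract multihomogeneous components of lower $X_n$-degree, each cyclically equivalent to an identity by induction. What makes the base case delicate, and what is absent both in the involution-free Theorem \ref{T2} and in the second-kind Theorem \ref{cor:T2star}, is the coupling of $X_n$ and $X_n^*$, which forces the simultaneous use of the $*$-symmetry of $g$ and the first-kind invariance of the reduced trace.
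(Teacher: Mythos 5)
Your proposal is correct and follows essentially the same route as the paper's proof: the Lie skew-ideal classification (Theorems \ref{prop:T1} and \ref{TPI}) narrows $\wt{\cL}$ to the eight candidates, and the decisive step --- that $(f+f^*)(\A)\subseteq[\wt{\A},\wt{\A}]$ forces $f+f^*$ to be cyclically equivalent to an element of $\id(\A)$ --- is handled exactly as in the paper, via the multihomogeneous reduction, Lemma \ref{L1*}, induction on the degree in $X_n$, and the symmetry $g^*=g$. The only local difference is the base case, where you obtain $w+w'^*=0$ from the nondegeneracy and $*$-invariance of the reduced trace form, whereas the paper shows $(w+w'^*)x\in[\wt{\A},\wt{\A}]+\wt{\K}=[\wt{\A},\wt{\A}]$ and then invokes simplicity of $\wt{\A}$ by writing $1=\sum_i u_i(w+w'^*)v_i$; both arguments are sound.
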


\begin{proof}
We start by remarking that
$\cL$ is a Lie skew-ideal of $\A$ by Theorem \ref{prop:T1}.
Therefore
$\wt{\cL}$ is either $0$, $\wt{\Z}$, $\wt{\K}$, $[\wt{\cS},\wt{\K}]$, $\wt{\cS}$, $\wt{\Z}$ + $\wt{\K}$,  $[\wt{\A},\wt{\A}]$ or $\wt{\A}$ by Theorem \ref{TPI}.

We divide the proof into two parts, (a) and (b), depending on whether or not $f+f^*$ is cyclically equivalent to an element of $\id(\A)$.

 (a) Assume that $f+f^*$ is cyclically equivalent to an identity. Then $f= \frac{f + f^*}{2} + \frac{f - f^*}{2}$ is a sum of an identity, commutators, and a skew-symmetric polynomial, and hence $f(\A)\subseteq [\A,\A]  + \K\subseteq 
 [\wt{\A},\wt{\A}] + \wt{\K}$. In Remark \ref{rKKK} we have showed that $ \wt{\K} = [ \wt{\K}, \wt{\K}]$. This forces  $f(\A)\subseteq  [\wt{\A},\wt{\A}]$, and consequently  $\wt{\cL}\subseteq [\wt{\A},\wt{\A}]$.
 
 Recall from the proof of Theorem \ref{T2} that $\wt{\Z}\cap [\wt{\A},\wt{\A}] = 0$.  
Therefore  $\wt{\cL}$ is neither $\wt{\Z}$, $\wt{\Z} + \wt{\K}$, $\wt{\cS}$ nor
 $\wt{\A}$. Thus $\wt{\cL}\in\{0,\wt{\K},[\wt{\cS},\wt{\K}],[\wt{\A},\wt{\A}]\}$. 
 If $f$ itself is an identity, then of course (i) holds. 
 Now suppose $f$ is not an identity. If $f\in\Skew + \id(\A)$, then (iii) holds.
 If $f\in\Sym + \id(\A)$, then (v) holds. Otherwise (vii) holds. Let us also point out that $f$ cannot belong to $\Skew + \cen(\A)$ if (vii) occurs.

(b) Now assume that $f+f^*$ is not cyclically equivalent to an identity. Let us first show that 
$\wt{\cL}\not\subseteq [\wt{\A},\wt{\A}]$. Suppose this is not true, that is, suppose $f(\A)\subseteq [\wt{\A},\wt{\A}]$. As a skew-symmetric polynomial, $f-f^*$  automatically satisfies  $(f-f^*)(\A)\subseteq\wt{\K}\subseteq [\wt{\A},\wt{\A}]$ by Remark \ref{rKKK}. But then $s=f+f^* = 2f - (f-f^*)$ has the same property, i.e., $s(\A)\subseteq [\wt{\A},\wt{\A}]$.  Suppose that $s$ is linear in $X_n$. Then Lemma \ref{L1*} tells us that there exist
$g=g(X_1,\ldots,X_{n-1},X_1^*,\ldots, X_{n-1}^*)\in \FF\langle \bar{X},\bar X^*\rangle$ and $g' = g'(X_1,\ldots,X_{n-1},X_1^*,\ldots, X_{n-1}^*) \in \FF\langle \bar{X},\bar X^*\rangle$ such that $s\csim gX_n + X_n^*g'$.
It is clear that then $(gX_n + X_n^*g')(\A)\subseteq [\wt{\A},\wt{\A}]$. Pick $a_1,\ldots,a_{n-1}\in\A$ and 
set $b=g(a_1,\ldots,a_{n-1},a_1^*,\ldots,a_{n-1}^*)$, $c=g'(a_1,\ldots,a_{n-1},a_1^*,\ldots,a_{n-1}^*)$. Then
$bx + x^*c\in [\wt{\A},\wt{\A}]$ for all $x\in\A$, and hence also for all $x\in\wt{\A}$. Consequently,
$$
(b+c^*)x = (bx + x^*c) + (c^*x - x^*c)\in [\wt{\A},\wt{\A}] + \wt{\K} =  [\wt{\A},\wt{\A}].
$$
Thus $w\wt{\A} \subseteq [\wt{\A},\wt{\A}]$ where $w = b +c^*$. As in the proof of Theorem \ref{T2} we see  that this yields $w=0$, i.e., 
$$g(a_1,\ldots,a_{n-1},a_1^*,\ldots,a_{n-1}^*) +
g'(a_1,\ldots,a_{n-1},a_1^*,\ldots,a_{n-1}^*)^*=0.$$
Since the $a_i$'s are arbitrary elements in $\A$, this means that
 $g + g'^*\in\id(\A)$. Thus 
 $$
 s\csim gX_n + X_n^*g' =  (-h^*X_n +X_n^* g') + (g+g'^*)X_n\in \Skew  +\id(\A).
 $$
Since $s = f+f^*\in \Sym$ and since both $\Skew$ and $\id(\A)$ are invariant under $\ast$, we now  arrive at the contradiction that $s$ is cyclically equivalent to an element in $\id(\A)$. Recall that this was derived under the assumption that $s$ is linear in $X_n$. The general case can be reduced to this one in the same way as in the proof of Theorem \ref{T2}. Therefore we have indeed $\wt{\cL}\not\subseteq [\wt{\A},\wt{\A}]$.

We now know that $\wt{\cL}\in\{\wt{\Z}, \wt{\cS}, \wt{\Z} + \wt{\K}, \wt{\A}\}$.
If $f\in \cen(\A)$, then (ii) holds. Suppose now that $f$ is not a central polynomial. If $f\in\Skew + \cen(\A)$, then (iv) holds. If  $f\in\Sym + \id(\A)$, then (vi) must hold. Otherwise we have (viii).

Due to the construction of the cases (i) - (viii) it is clear that
they are exhaustive and mutually exclusive.
\end{proof}

\begin{remark}
Let us mention again that  in finite dimensional central simple algebras our theorems get simpler forms. Roughly speaking, for these algebras the presence of $\,\,\wt{}\,\,$ is simply unnecessary in Theorems \ref{T2}, \ref{cor:T2star} and \ref{thm:T2invo}. That is, $\wt{\A} =\A$, $\wt{\Z} =\Z$, $\wt{\cS} =\cS$, etc. 
\end{remark}

We are now in a position to give the tracial Nullstellens\"atze
for free $*$-algebras:

\begin{corollary} \label{cor:C3}
Let $d\ne 1,2,4$,  let  $\Char (\FF)=0$  and let $f 
\in \FF\langle\bar{X},\bar X^*\rangle$ be a polynomial in $n$ variables.
Fix an involution $\ast$ on $M_d(\FF)$. If it is of the first kind, assume 
that $f\in\Sym$.
Then $\tr(f(A_1,\ldots,A_n,A_1^*,\ldots, A_n^*)) =0$ for all 
$A_i\in M_d(\FF)$ 
if and only if $f$ is cyclically equivalent to an identity of $ M_d(\FF)$.
\end{corollary}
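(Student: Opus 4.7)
The easy direction is immediate: if $f \csim g$ for some $g \in \id(M_d(\FF))$, then $f - g$ is a sum of commutators in $\FF\langle\bar X,\bar X^*\rangle$, and therefore every value $f(A_1,\ldots,A_n,A_1^*,\ldots,A_n^*) - g(A_1,\ldots,A_n,A_1^*,\ldots,A_n^*) = f(A_1,\ldots,A_n,A_1^*,\ldots,A_n^*)$ is a sum of commutators in $M_d(\FF)$, hence has trace zero. For the nontrivial direction, assume $\tr(f(A_1,\ldots,A_n,A_1^*,\ldots,A_n^*))=0$ for all $A_i\in M_d(\FF)$. Since the kernel of $\tr$ on $M_d(\FF)$ is precisely $[M_d(\FF),M_d(\FF)]$, the hypothesis says
$$
\cL := \Span f(M_d(\FF)) \subseteq [M_d(\FF),M_d(\FF)].
$$
Now $\A := M_d(\FF)$ is a (noncommutative, since $d\ge 3$) prime PI algebra with $\wt{\A}=\A$ and $\wt\Z=\Z=\FF$, so the condition $d\ne 1,2,4$ translates into $\dim_{\wt\Z}\wt\A=d^2\ne 1,4,16$.

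For an involution of the second kind on $M_d(\FF)$, apply Theorem \ref{cor:T2star}. It identifies $\wt\cL$ as one of $0,\wt\Z,[\wt\A,\wt\A],\wt\A$. The relation $\wt\Z\cap[\wt\A,\wt\A]=0$ (derived in the proof of Theorem \ref{T2} from $\Char(\FF)=0$ and $\tr I = d\ne 0$) rules out $\wt\cL=\wt\Z$; and $\wt\A\not\subseteq[\wt\A,\wt\A]$ for the same reason rules out $\wt\cL=\wt\A$. We are left with cases (i) and (iii) of Theorem \ref{cor:T2star}, both of which state that $f$ is cyclically equivalent to an identity of $\A$.

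For an involution of the first kind, we additionally use the hypothesis $f\in\Sym$, which forces $f(\A)\subseteq\cS$ and therefore $\wt\cL\subseteq\wt\cS$. Applying Theorem \ref{thm:T2invo}, we now impose two restrictions on $\wt\cL$: it must sit inside $\wt\cS$, and it must sit inside $[\wt\A,\wt\A]$. Among the eight candidates $0,\wt\Z,\wt\K,[\wt\cS,\wt\K],\wt\cS,\wt\Z+\wt\K,[\wt\A,\wt\A],\wt\A$, the containment in $\wt\cS$ leaves only $0,\wt\Z,[\wt\cS,\wt\K],\wt\cS$ (using $\wt\K\cap\wt\cS=0$ in characteristic $\ne 2$), while the containment in $[\wt\A,\wt\A]$ eliminates $\wt\Z$ (again from $\wt\Z\cap[\wt\A,\wt\A]=0$) and $\wt\cS$ (since $I\in\wt\cS\setminus[\wt\A,\wt\A]$). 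Only cases (i) and (v) of Theorem \ref{thm:T2invo} remain, and in both of them $f$ is cyclically equivalent to an element of $\id(\A)$, as desired.

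The only delicate point in the plan is the bookkeeping with the list of Lie skew-ideals in the first-kind case: one must verify that $\wt Z\subseteq\wt\cS$ (true because a first-kind involution fixes $\Z$ pointwise) so that the candidate $\wt\Z$ is not eliminated too early by the $\wt\cS$-containment, and correctly identify which of the eight classes are simultaneously traceless and symmetric. Once this is sorted out, the result drops out of Theorems \ref{cor:T2star} and \ref{thm:T2invo} with no further computation.
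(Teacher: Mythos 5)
Your proposal is correct and follows exactly the route the paper intends for this corollary: reduce the trace condition to $\Span f(M_d(\FF))\subseteq[M_d(\FF),M_d(\FF)]$, note that $M_d(\FF)$ is its own central closure, and then run through the case lists of Theorems \ref{cor:T2star} and \ref{thm:T2invo}, using $\wt{\Z}\cap[\wt{\A},\wt{\A}]=0$ (and, in the first-kind case, the extra containment in $\wt{\cS}$ coming from $f\in\Sym$) to see that only cases (i) and (iii), respectively (i) and (v), survive. The bookkeeping you flag as delicate is handled correctly.
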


\begin{corollary}[cf.~Theorem $2.1$ in \cite{KS}] \label{cor:C4}
Let  $\Char (\FF)=0$  and let $f
\in \FF\langle\bar{X},\bar X^*\rangle$ be a polynomial in $n$ variables.
Fix an involution $\ast$ on $M_d(\FF)$. If it is of the first kind, assume 
that $f\in\Sym$.
Then $\tr(f(A_1,\ldots,A_n,A_1^*,\ldots, A_n^*)) =0$ for all 
$A_i\in M_d(\FF)$ 
and all $d\ge 2$ if and only if $f\csim 0$.
\end{corollary}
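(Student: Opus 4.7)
The forward direction is immediate: the trace vanishes on any commutator in $M_d(\FF)$, so $f\csim 0$ forces $\tr f(\bar A,\bar A^*)=0$ for all $A_i\in M_d(\FF)$ and all $d\geq 2$, irrespective of the involution chosen on $M_d(\FF)$.

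For the converse, the plan is to apply Corollary~\ref{cor:C3} at each sufficiently large $d$ and then leverage a $*$-analog of the Amitsur--Levitzki theorem. First, I would reduce to the multihomogeneous case by the Vandermonde/Lemma~\ref{R1} argument, taking scalars in the fixed field $\FF_0$ of $*$ on $\FF$ (infinite since $\Char\FF=0$, as exploited in the proof of Theorem~\ref{prop:T1}); each multihomogeneous component of $f$ inherits the trace-vanishing hypothesis. So assume $f$ is multihomogeneous of total degree $N$. For every $d\geq\max(5,N+1)$, Corollary~\ref{cor:C3} produces a $*$-polynomial identity $g_d\in\id(M_d(\FF))$ with $f\csim g_d$. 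Because both cyclic equivalence and $*$-polynomial identities respect the multihomogeneous grading (the latter again via Lemma~\ref{R1} applied to $g_d$), we may choose $g_d$ multihomogeneous of the same type as $f$, so in particular $\deg g_d=N$.

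The main step, and the main obstacle, is to conclude $g_d=0$ for $d$ sufficiently large. One transparent instantiation uses a symplectic embedding: assuming the involution on $M_{2d}(\FF)$ is the symplectic one, then for $A,B\in M_d(\FF)$ the block-diagonal matrix $T=\begin{bmatrix}A&0\\0&B^t\end{bmatrix}\in M_{2d}(\FF)$ satisfies $T^*=\begin{bmatrix}B&0\\0&A^t\end{bmatrix}$, hence
\[
g_{2d}(T_i,T_i^*)=\begin{bmatrix}g_{2d}(A_i,B_i)&0\\0&g_{2d}(B_i^t,A_i^t)\end{bmatrix}=0,
\]
which forces $g_{2d}(A_i,B_i)=0$ for all $A_i,B_i\in M_d(\FF)$ when $g_{2d}$ is viewed as an ordinary polynomial in the $2n$ variables $X_i,X_i^*$. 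The classical Amitsur--Levitzki bound then yields $g_{2d}=0$, since $\deg g_{2d}=N<2d$. Analogous (standard) reductions handle orthogonal first-kind and second-kind involutions---for second-kind, one restricts to $\FF_0$-valued matrices to convert the $*$-identity question into one of first-kind type. In every case, $g_d=0$ for $d\gg N$, whence $f\csim 0$. This mirrors the reasoning of \cite[Theorem~2.1]{KS} cited in the statement, with Corollary~\ref{cor:C3} playing the role of its involution-free counterpart.
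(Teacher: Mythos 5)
The overall architecture is the natural one (and, as far as one can tell, the intended one, since the paper states this corollary without proof): reduce to multihomogeneous $f$, invoke Corollary~\ref{cor:C3} at each admissible $d$ to get $f\csim g_d$ with $g_d\in\id(M_d(\FF))$ of the same multidegree, and then kill $g_d$ for large $d$ by a degree bound. The forward direction, the Vandermonde reduction over $\FF_0$, and the observation that cyclic equivalence and $*$-identities respect the multihomogeneous grading are all fine, and your symplectic computation is correct: for $T=\operatorname{diag}(A,B^t)$ one indeed gets $T^*=\operatorname{diag}(B,A^t)$, the $(1,1)$-blocks of $T$ and $T^*$ are independent, and Amitsur--Levitzki finishes that case.

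The genuine gap is that the ``analogous (standard) reductions'' for the other involutions do not exist in the form you suggest, and this is precisely the nontrivial content of the step. For the transpose involution the same block substitution gives $T^t=\operatorname{diag}(A^t,B)$, so the $(1,1)$-block of $g(T_i,T_i^t)$ is $g(A_i,A_i^t)$ --- the variables $X_i$ and $X_i^*$ are \emph{not} decoupled, and the argument returns exactly to its starting point. This is not a cosmetic issue: $(M_d(\FF),t)$ does admit nonzero $*$-identities of degree well below $2d$ (e.g.\ $[X_1-X_1^*,X_2-X_2^*]$ for $d=2$, and standard identities in skew variables of degree about $2d-2$ in general), so a $*$-identity is genuinely not an ordinary identity in $2n$ variables without further work. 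To decouple in the orthogonal case one must use the involution adjoint to the hyperbolic form, $X\mapsto PX^tP^{-1}$ with $P=\bigl[\begin{smallmatrix}0&I\\ I&0\end{smallmatrix}\bigr]$, under which $\operatorname{diag}(A,B^t)\mapsto\operatorname{diag}(B,A^t)$; this involution is conjugate to the transpose only over an algebraically closed field, so one must first extend scalars (legitimate because multihomogeneous $*$-identities of $M_d$ over an infinite field persist under field extension, but this has to be said). Your proposed treatment of the second kind by ``restricting to $\FF_0$-valued matrices'' has the same problem --- it only converts the question into the unresolved orthogonal one, and in addition $M_d(\FF_0)$ need not be invariant under an arbitrary unitary involution; the standard route there is $\A\otimes_{\FF_0}\FF\cong\A\times\A^{\mathrm{op}}$ with the exchange involution. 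Alternatively, all three cases are covered at once by citing Amitsur's theorem \cite{Amit} (already in the paper's bibliography): an algebra satisfying a nonzero $*$-identity of degree $N$ satisfies an ordinary identity of degree bounded in terms of $N$ alone, which together with Amitsur--Levitzki forces $g_d=0$ for $d\gg N$. As written, your proof is complete only when the fixed involutions are symplectic.
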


\begin{remark}
The results given in this subsection 
can be easily extended to free algebras with involution 
generated by symmetric (or skew-symmetric) variables. 
\end{remark}

\section{Algebra of generic matrices} \label{sec5}

In this section we interpret some of our main results
 in
the algebra of generic matrices \cite{Rowen}. 
As is often the case, verifying a condition on 
values of a polynomial on $d\times d$ matrices
is conveniently done in the algebra of generic matrices.
Here we discuss ``having zero trace'' and 
present appropriate versions of 
Corollaries \ref{C1} (in Theorem \ref{thm:gm1}) and \ref{cor:C3}
(in Theorem \ref{thm:gm2}).

Somewhat related is the result of Amitsur and Rowen \cite{AR},
where they show that in central simple algebras an element
is a sum of (two) commutators if and only if its reduced trace
is zero; see also \cite[Appendix to 3]{AR} and
\cite{RR}.

\subsection{Algebra of generic matrices (without involution)}

Let $\zeta:=(\zeta_{ij}^{(\ell)}\mid1\leq i, j\leq d,\,\ell\in\N)$ 
denote commuting
variables and form the polynomial algebra $\FF[\zeta]$. 
Then the \textit{algebra of generic $d\times d$ matrices} 
$\GM_d(\FF)$ is the subalgebra of $M_d(\FF[\zeta])$ generated by the 
$d\times d$ matrices $Y_{\ell}:=\begin{bmatrix}
\zeta_{ij}^{(\ell)}\end{bmatrix}_{1\leq i,j\leq d}$, where 
$\ell\in\N$. Each $Y_{\ell}$ is
called a \textit{generic matrix}. Furthermore, 
$\GM_d(\FF,n)$
is used to denote the 
subalgebra generated by the $n$ generic matrices $Y_1,\ldots,Y_n$. 
The algebra of generic $d\times d$ matrices is a PI algebra and a domain. 
Moreover, $\GM_d(\FF)$ is isomorphic to $\FF\ax /\id_d$,
where $\id_d$ is the ideal of all polynomial identities of
$d\times d$ matrices.

$\GM_d(\FF)$ enjoys the following property: any algebra
homomorphism 
$${\rm ev}_{ a}:\FF[\zeta]\to\FF, \quad p\mapsto p( a)
$$ 
``lifts'' to a 
homomorphism of algebras
$\GM_d(\FF)\to M_d(\FF)$ by entrywise evaluation.
The image of an element $f\in\GM_d(\FF)$
under this map will be denoted simply by $f( a)$.

\begin{theorem}[Tracial Nullstellensatz for generic matrices]\label{thm:gm1}
Suppose $\Char(\FF)=0$ and let $f\in\GM_d(\FF)$. Then the
following are equivalent:
\begin{enumerate}[\rm (i)]
\item $f$ is a sum of commutators in $\GM_d(\FF)$;
\item $\tr(f)=0$;
\item $\tr ( f( a))=0$ for all $ a\in M_d(\FF)^\N$;
\item $f( a)$ is a sum of commutators in $M_d(\FF)$ for
all $ a\in M_d(\FF)^\N$.
\end{enumerate}
\end{theorem}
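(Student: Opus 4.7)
The plan is to establish the equivalences by setting up the cycle (i) $\Rightarrow$ (ii) $\Rightarrow$ (iii) $\Rightarrow$ (i), and then complete the proof by inserting (iv) via the easy implications (i) $\Rightarrow$ (iv) $\Rightarrow$ (iii). Throughout, the key fact to keep in mind is that $\GM_d(\FF)=\FF\ax/\id_d$ sits inside $M_d(\FF[\zeta])$ and that its trace is the restriction of the coordinate-wise trace on $M_d(\FF[\zeta])$, which takes values in $\FF[\zeta]$.

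First I would dispatch (i) $\Rightarrow$ (ii) and (i) $\Rightarrow$ (iv): traces of commutators vanish in any associative algebra (so (i) $\Rightarrow$ (ii)), and evaluation $\mathrm{ev}_a$ is an algebra homomorphism $\GM_d(\FF)\to M_d(\FF)$, which carries commutators to commutators (so (i) $\Rightarrow$ (iv)). Similarly (iv) $\Rightarrow$ (iii) is immediate because traces of commutators in $M_d(\FF)$ vanish.

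Next I would prove (ii) $\Leftrightarrow$ (iii). The point is that $\tr(f)\in\FF[\zeta]$ is a polynomial in the commuting variables $\zeta_{ij}^{(\ell)}$, and since evaluation at $a\in M_d(\FF)^{\N}$ is an $\FF$-algebra homomorphism that commutes with taking traces (the trace being a sum of diagonal entries), we have $(\tr f)(a)=\tr(f(a))$ for every $a$. Thus $\tr(f)=0$ in $\FF[\zeta]$ forces $\tr(f(a))=0$ for every $a$. Conversely, if $\tr(f(a))=0$ for every tuple $a$, then the polynomial $\tr(f)\in\FF[\zeta]$ vanishes identically on $\FF$, and since $\Char(\FF)=0$ makes $\FF$ infinite, this forces $\tr(f)=0$.

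The main content is (iii) $\Rightarrow$ (i), and this is precisely where Corollary \ref{C1} does the work. I would lift $f$ to a representative $\tilde f\in\FF\ax$. Condition (iii) says exactly that $\tr\bigl(\tilde f(A_1,\ldots,A_n)\bigr)=0$ for all $A_i\in M_d(\FF)$, so Corollary \ref{C1} applies and yields that $\tilde f$ is cyclically equivalent to some $g\in\id_d$, i.e.\ $\tilde f-g=\sum_i[p_i,q_i]$ in $\FF\ax$. Reducing modulo $\id_d$ collapses $g$ to $0$ and exhibits $f$ as a sum of commutators in $\GM_d(\FF)$, which is (i).

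The only subtle step is (iii) $\Rightarrow$ (i), and even that is not really an obstacle because Corollary \ref{C1} has already done the heavy lifting; the remaining work is just bookkeeping about passing between $\FF\ax$ and the quotient $\GM_d(\FF)$. The other implications are essentially formal consequences of the definitions together with the infinite-field argument converting the polynomial identity $\tr(f)=0$ on all evaluations to an identity in $\FF[\zeta]$.
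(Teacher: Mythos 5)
Your proof is correct and follows essentially the same route as the paper: the formal implications are dispatched by the vanishing of traces of commutators and the infinite-field argument for $\tr(f)\in\FF[\zeta]$, and the substantive implication (iii) $\Rightarrow$ (i) is obtained exactly as in the paper by lifting $f$ to a preimage in $\FF\ax$, applying Corollary \ref{C1}, and reducing modulo $\id_d$. The only difference is cosmetic: you arrange the implications in a cycle, whereas the paper states the easy equivalences directly.
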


\begin{proof}
The equivalences (ii) $\Leftrightarrow$ (iii) and 
(iii) $\Leftrightarrow$ (iv) are obvious as is
the implication (i) $\Rightarrow$ (iii). For the proof
of (iii) $\Rightarrow$ (i) let $F=F(X_1,\ldots,X_n)\in\FF\ax$ denote
a preimage of $f$ under the homomorphism
$\FF\ax\to\GM_d(\FF)$. Then $f( a)=
F(Y_1( a),\ldots, Y_n( a))$. As
$ a$ runs through all of $M_d(\FF)^\N$,
$(Y_1( a),\ldots, Y_n( a))$ sweeps through
all $n$-tuples of $d\times d$ matrices over $\FF$.
By assumption, $\tr (F(A_1,\ldots, A_n))=0$
for all $A_i\in M_d(\FF)$. 
Hence Corollary \ref{C1} implies that $F$ is cyclically equivalent
to an identity of $M_d(\FF)$. Thus $f$ is a sum of commutators.
\end{proof}

It is clear that a similar statement holds for 
$f\in\GM_d(\FF,n)$.

\subsection{Algebra of generic matrices with involution}
Like in the classical construction of the algebra of generic
matrices,
it is possible to construct the algebra of generic matrices {\it with
involution} \cite{ps}. 
To each type of involution (orthogonal, symplectic
and unitary) an algebra of generic matrices
with involution can be associated. We proceed to describe details.
From now we assume that 
$\FF$ is a field of characteristic
$0$ with an involution.

Let $\FF\langle\bar X, \bar X^*\rangle$ be the free 
$*$-algebra
over $\FF$. 
For an involution of type ${\rm J}\in\{$symplectic, orthogonal, unitary$\}$,
let $\id_{{\rm J},d}\subseteq\FF\langle\bar X, \bar X^*\rangle$ denote the ideal of all identities satisfied by degree $d$ central simple
algebras with involution of type $\rm J$ \cite[\S 2]{Rowen}
(a word of caution: 
the notation has changed from Section \ref{sec:starEval} in order to emphasize
the dependence on the (type of) involution).
That is, $f=f(X_1,\ldots,X_k,X_1^*,\ldots
,X_k^*)\in\FF\langle\bar X, \bar X^*\rangle$ 
is an element of $\id_{{\rm J},d}$ if and only if for every
central simple algebra $\A$ with involution of type $\rm J$ of
degree $d$ and every $a_1,\ldots,a_k\in\A$, 
$$f(a_1,\ldots,a_k,a_1^*,\ldots,a_k^*)=0.$$
Then $\GM_d(\FF,{\rm J}):=
\FF\langle\bar X, \bar X^*\rangle/
\id_{{\rm J},d}$ is the {\it algebra of generic $d\times d$ 
matrices
with involution of type $\rm J$}.

Let $\zeta,Y_\ell$ be as above. The involution on $\FF$
is extended to $\FF[\zeta]$ by fixing the $\zeta_{ij}^{(\ell)}$
pointwise. 
\begin{enumerate}[\rm (1)]
\item If ${\rm J}=$ orthogonal or ${\rm J}=$ unitary, 
then $\GM_d(\FF,{\rm J})$ is canonically
isomorphic to the (unital) $\FF$-subalgebra of $M_d(\FF[\zeta])$ 
generated by the $Y_\ell$ and their transposes.
\item If ${\rm J}=$ symplectic, then $d$ is even, say $d=2d_0$, 
and the (unital) $\FF$-subalgebra of $M_{2d_0}(\FF[\zeta])$ 
generated by the $Y_\ell$ and their
images under the usual symplectic involution 
is (canonically) isomorphic to $\GM_{2d_0}(\FF,{\rm J})$.
\end{enumerate}

Let $\KK$ denote the set of all symmetric elements
of $\FF$.
Then every $*$-algebra homomorphism $\FF[\zeta]\to\FF$
is described by a point $a\in M_d(\KK)^\N$ and given by the images of 
$\zeta_{ij}^{(\ell)}$.
Hence it induces a $*$-algebra (evaluation) homomorphism 
$\GM_d(\FF,{\rm J})\to M_d(\FF)$ 
denoted by $g\mapsto g(a,a^*)$.
If $G\in \FF\langle \bar X,\bar X^*\rangle$ is a polynomial whose 
coset in $\GM_d(\FF,{\rm J})$
is represented by $g$, then $g(a,a^*)$ equals $G(a,a^*)$, the evaluation
of $G$ at the tuple of $d\times d$ matrices $a$.
This means that, as before, any $*$-algebra
homomorphism 
$${\rm ev}_{ a}:\FF[\zeta]\to\FF, \quad p\mapsto p( a, a^*)
$$ 
lifts to a 
$*$-homomorphism
$\GM_d(\FF,{\rm J})\to M_d(\FF)$ by entrywise evaluation.

\begin{theorem}[Tracial Nullstellensatz for generic matrices
with involution]\label{thm:gm2}
\hfill Suppose $\Char(\FF)=0$, fix a type ${\rm J}$ and let
$f\in\GM_d(\FF,{\rm J})$. 
Write $\KK$ for the set of symmetric elements of $\FF$. 
If ${\rm J}\neq$ unitary, assume moreover that
$f=f^*$.
Then the
following are equivalent:
\begin{enumerate}[\rm (i)]
\item $f$ is a sum of commutators in $\GM_d(\FF,{\rm J})$;
\item $\tr( f)=0$;
\item $\tr (f( a, a^*))=0$ for all $ a\in M_d(\KK)^\N$;
\item $f( a, a^*)$ is a sum of commutators in $M_d(\FF)$ for
all $ a\in M_d(\KK)^\N$.
\end{enumerate}
\end{theorem}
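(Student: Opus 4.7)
The plan is to mirror the proof of Theorem \ref{thm:gm1}, substituting Corollary \ref{cor:C3} for Corollary \ref{C1}. Three of the four claimed equivalences are routine: (i)$\Rightarrow$(iii) since commutators have zero trace; (iii)$\Leftrightarrow$(iv) by the classical theorem that a $d\times d$ matrix is a sum of commutators iff its trace is zero; and (ii)$\Leftrightarrow$(iii) because $\tr(f)$ is an element of $\FF[\zeta]$ which, as $\KK$ is infinite in characteristic zero, vanishes iff it does so at every $\KK$-valued specialization of $\zeta$---that is, iff $\tr(f(a,a^*))=0$ for every $a\in M_d(\KK)^\N$.

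The substance lies in (iii)$\Rightarrow$(i). Lift $f$ to $F\in\FF\langle\bar X,\bar X^*\rangle$ via the canonical projection $\FF\langle\bar X,\bar X^*\rangle\twoheadrightarrow \GM_d(\FF,{\rm J})=\FF\langle\bar X,\bar X^*\rangle/\id_{{\rm J},d}$. Each $a\in M_d(\KK)^\N$ induces a $\ast$-algebra evaluation $\GM_d(\FF,{\rm J})\to M_d(\FF)$ sending $f$ to $F(Y_1(a),\ldots,Y_n(a),Y_1(a)^*,\ldots,Y_n(a)^*)$, and as $a$ varies the tuple $(Y_1(a),\ldots,Y_n(a))$ ranges over $M_d(\KK)^n$. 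Thus hypothesis (iii) becomes $\tr(F(A,A^*))=0$ for every $A\in M_d(\KK)^n$, where $\ast$ denotes the chosen type-$\rm J$ involution on $M_d(\FF)$.

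When ${\rm J}\ne$ unitary (first kind), the assumption that $\ast$ be of the first kind forces the involution on $\FF$ to be trivial, so $\KK=\FF$ and the above is exactly the hypothesis of Corollary \ref{cor:C3}. The extra assumption $f=f^*$ permits replacing $F$ with its symmetric part $(F+F^*)/2\in\Sym$, and Corollary \ref{cor:C3} then yields $F\csim p$ for some $p\in\id(M_d(\FF),\ast)$. Because $\FF$ is infinite of characteristic zero, $\id(M_d(\FF),\ast)=\id_{{\rm J},d}$, so reducing modulo $\id_{{\rm J},d}$ shows that $f$ is a sum of commutators in $\GM_d(\FF,{\rm J})$.

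For ${\rm J}=$ unitary one has $\KK\subsetneq\FF$, and hypothesis (iii) only guarantees trace vanishing on $M_d(\KK)^n$ rather than all of $M_d(\FF)^n$, so Corollary \ref{cor:C3} is not immediately available. My plan is to exploit the tensor factorization $\GM_d(\FF,U)\cong\FF\otimes_\KK\GM_d(\KK,O)$: writing $f=f_0+\alpha f_1$ with $f_i\in\GM_d(\KK,O)$ and $\alpha\in\FF$ a skew-symmetric element giving $\FF=\KK\oplus\KK\alpha$, the equation $\tr(f)=0$ forces $\tr(f_i)=0$ in $\KK[\zeta]$, and the identity $[\FF\otimes_\KK A,\FF\otimes_\KK A]=\FF\otimes_\KK[A,A]$ reduces the problem to showing each $f_i$ is a sum of commutators in $\GM_d(\KK,O)$. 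I expect the hard step to be precisely this last reduction: the orthogonal case just proved applies only to symmetric elements, so the skew-symmetric component of $f_i$ (which has automatic trace zero in characteristic $\ne 2$) must be handled separately, either through a direct strengthening of the orthogonal case or via an invariant-theoretic observation that $Y_\ell$ and $Y_\ell^t$ are identified in the abelianization of $\GM_d(\KK,O)$.
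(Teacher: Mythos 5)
Your treatment of the non-unitary (first-kind) case is correct and is exactly the argument the paper intends when it says the proof is the same as that of Theorem \ref{thm:gm1}: since a first-kind involution is trivial on the centre, $\KK=\FF$, the evaluations in (iii) sweep out all of $M_d(\FF)^n$, and Corollary \ref{cor:C3} applies to the symmetric representative $(F+F^*)/2$. Do record, however, that Corollary \ref{cor:C3} excludes $d=1,2,4$, a restriction absent from the statement of Theorem \ref{thm:gm2}.

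The difficulty you flag in the unitary case is genuine, and it cannot be closed: with the paper's definitions the implication (iii)$\Rightarrow$(i) actually fails for unitary type. Choose $\alpha\in\FF$ with $\alpha^*=-\alpha\neq 0$ and let $f$ be the image of $F=\alpha(X_1-X_1^*)$ in $\GM_d(\FF,{\rm J})$, ${\rm J}=$ unitary, $d\ge 2$; note $F=F^*$. In $M_d(\FF[\zeta])$ one has $f=\alpha(Y_1-Y_1^t)\neq 0$ and $\tr(f)=0$, so (ii), (iii) and (iv) all hold. Yet $f$ is not a sum of commutators: if $F$ were cyclically equivalent to an element of $\id_{{\rm J},d}$, then comparing multihomogeneous components of total degree one (the degree-one component of any sum of commutators in $\FF\langle\bar X,\bar X^*\rangle$ vanishes, since one factor of each contributing commutator must be a scalar, while multihomogeneous components of elements of $\id_{{\rm J},d}$ again lie in $\id_{{\rm J},d}$ by the Vandermonde argument over the infinite field $\KK$) would give $\alpha(X_1-X_1^*)\in\id_{{\rm J},d}$, which fails on any non-symmetric element of a degree-$d$ central simple algebra with unitary involution. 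The same computation applies if one instead takes $\GM_d(\FF,{\rm unitary})$ to be the subalgebra of $M_d(\FF[\zeta])$ generated by the $Y_\ell$ and $Y_\ell^t$.

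The source of the failure is exactly the one you isolated: the generic matrices $Y_\ell$ have symmetric entries, so the evaluations in (iii) only realize the pairs $(a,a^t)$ with $a\in M_d(\KK)$, whereas Corollary \ref{cor:C3} needs the pairs $(A,A^*)$ for all $A\in M_d(\FF)$, and the former set is a proper $\KK$-subspace of the latter, hence not Zariski dense in it. Consequently your tensor-factorization plan must break down precisely at the step you anticipated would be hard, and no alternative plan can succeed. The unitary case of the theorem needs to be either restated --- condition (iii) should quantify over evaluations sweeping out all of $M_d(\FF)^n$, which amounts to taking generic matrices of the form $Y_\ell+\alpha Z_\ell$ with two families of commuting indeterminates --- or dropped; with that modification your argument, namely the proof of Theorem \ref{thm:gm1} combined with Corollary \ref{cor:C3}, goes through.
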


The proof is essentially the same as that of Theorem \ref{thm:gm1}
and is therefore omitted.

\end{document}